\documentclass[12pt]{amsart}

\usepackage{amssymb,amsmath,latexsym,amsthm}
\usepackage[english]{babel}
\usepackage[all]{xy}

\newtheorem{theorem}{Theorem}[section]
\newtheorem{lemma}{Lemma}[section]
\newtheorem{proposition}{Proposition}[section]
\newtheorem{cor}{Corollary}[section]

\theoremstyle{definition}

\newtheorem*{remark}{Remark}
\newtheorem{example}{Example}



\begin{document}

\title{On optimal embeddings and trees}


\author{\sc Manuel Arenas\\
\sc Luis Arenas-Carmona\\
\sc Jaime Contreras\\}


\newcommand\Q{\mathbb Q}
\newcommand{\h}{\mathfrak{H}}
\newcommand\alge{\mathfrak{A}}
\newcommand\Da{\mathfrak{D}}
\newcommand\Ea{\mathfrak{E}}
\newcommand\Ha{\mathfrak{H}}
\newcommand\oink{\mathcal O}
\newcommand\oinki{\mathcal O_k}
\newcommand\matrici{\mathbb{M}}
\newcommand\Txi{\lceil}
\newcommand\ad{\mathbb{A}}
\newcommand\enteri{\mathbb Z}
\newcommand\finitum{\mathbb{F}}
\newcommand\bbmatrix[4]{\left(\begin{array}{cc}#1&#2\\#3&#4\end{array}\right)}
\newcommand\lbmatrix[4]{\textnormal{\scriptsize{$\left(\begin{array}{cc}#1&#2\\#3&#4
\end{array}\right)$}}\normalsize}
\newcommand\lcvector[2]{\textnormal{\scriptsize{$\left(\begin{array}{c}#1\\#2\end{array}\right)$}}\normalsize}

\maketitle

\begin{abstract}
We apply the theory of Bruhat-Tits trees to the study of optimal embeddings of two and three dimensional commutative orders into quaternion algebras. Specifically,  we determine how many conjugacy classes of global Eichler orders in a quaternion algebra yield optimal representations of such orders. This completes the previous work by  C. Maclachlan, who considered only  Eichler orders of square free level and integral domains as sub-orders. The same technique is used in the second part of this work to compute local embedding numbers, extending previous results by J. Brzezinski.
\end{abstract}

\bigskip
\section{Introduction}

Let $K$ be a number  field, let $\oink$ be the ring of integers of $K$, and let $\alge$ be an indefinite
 quaternion $K$-algebra. Let
$L\subseteq\alge$ be a subalgebra, and let $\Ha$ be an order of full rank in $L$. The question on whether every maximal order $\Da\subseteq\alge$ contains a conjugate or, as we say in what follows, represents $\Ha$, is known as the selectivity
problem. When $L$ is a maximal commutative sub-algebra, the conditions on $\alge$ and $\Ha$ for which selectivity
can occur were described completely by T. Chinburg and E. Friedman in \cite{FriedmannQ}.
 These results where extended to Eichler orders $\Da$ in \cite{Guo} and \cite{Chan}.
  The second author of the present work gave a generalization to representations of 
an arbitrary suborder $\Ha$ into into finite intersections of maximal orders \cite{eichler2}.
 A different generalization of the result in  \cite{FriedmannQ} to a large family of orders 
of maximal rank was given by Linowitz \cite{lino1}. These results are partly motivated by the rol played
by quaternion orders in some early constructions of isospectral, but non-isometric hyperbolic  varieties.
This is possible since, in the absence of selectivity, the number of essentially different ways to embed
$\Ha$ into $\Da$, the global embedding number, remains constant as $\Da$ runs over a genus of orders,
see \cite{Vigneras2} for details. 

 By definition, an embedding $\phi:L\rightarrow\alge$ induces an optimal embeding $\tilde{\phi}:\Ha\rightarrow\Da$ if $\phi^{-1}(\Da)=\Ha$. If $\Ha$ is a suborder of $\Da$, we say that $\Ha$ is optimal in $\Da$ or that $\Da$ optimally contains
$\Ha$ whenever the inclusion $i:\Ha\rightarrow \Da$ is optimal. 
 C. Maclachlan \cite{Macla} proved a result analogous to those in \cite{Guo} and \cite{Chan}, for optimal embeddings, provided that the level of the Eichler order $\Da$ is square free, a condition that is removed in the present work. Furthermore, Theorem 1.1 bellow applies to all non-trivial orders of non-full rank in a matrix algebra.  A crucial tool in our proof is the local interpretation of optimality in terms of branches (\S3). The same 
tool is used in subsequent sections to the study of local embeding numbers (Theorems 1.2-1.4), which generalize previous computations by J. Brzezinski \cite{enumbers}.

For any field $K$ whose characteristic is not $2$, the Hilbert symbol $\left(\frac{a,b}K\right)$ denotes 
the quaternion algebra $K[i,j]$ defined by the relations $i^2=a$, $j^2=b$, and $ij=-ji$.
The level of an Eichler order $\Ea=\Da_1\cap\Da_2$, where $\Da_1$ and $\Da_2$ are maximal, is an ideal
$I=\prod_\wp\wp^{\alpha_\wp}$ satisfying $\oink/I\cong\Da_1/\Ea\cong\Da_2/\Ea$ as $\oink$-modules. 
The non-negative integer $\alpha_\wp$, where $\wp$ is a maximal order of $\oink$, is called the local level of $\Ea$
at $\wp$. The level $I$ is a complete invariant for the genus $\mathrm{gen}(\Ea)$. The latter is denoted
 $\mathbb{O}_I$ in all that follows.

\begin{theorem}\label{t1}
Let $\alge$ be an indefinite quaternion algebra over a number field $K$, let $\Ha\subseteq\alge$ be an order of rank 2, and let $L=K\Ha$.
Let $\mathbb{O}_I$ be a genus of Eichler orders in $\alge$ representing $\Ha$, and set 
$I=\prod_\wp\wp^{\alpha_\wp}$ as above. Then:
\begin{enumerate}
\item If $L\cong K\times K$,then $\Ha$ embeds optimally into every order in $\mathbb{O}_I$.
\item If $L\cong K[x]/(x^2)$,  then $\Ha$ embeds optimally into the orders in exactly one 
conjugacy class in $\mathbb{O}_I$. 
\item If $L\cong K[\sqrt d]$ is a field, then $\Ha$ embeds optimally into every order in $\mathbb{O}_I$
 unless the following conditions are satisfied:
\begin{enumerate}
\item $\alge\cong\left(\frac{-1,d}K\right)$,
\item $L/K$ is unramified at all finite places, and
\item $L/K$ splits at $\wp$ whenever $\alpha_\wp$ is odd.
\end{enumerate}
Furthermore, if these conditions are satisfied then $\Ha$ embeds optimally into the orders in exactly one half of all conjugacy classes in $\mathbb{O}_I$. 
\end{enumerate}
On the other hand, if $L$ has rank 3, and if there is an order of $\mathbb{O}_I$ representing $\Ha$ optimally,
 then $\Ha$ embeds optimally into the orders in exactly one  conjugacy class in that genus. 
\end{theorem}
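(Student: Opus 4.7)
The plan is a local-to-global reduction by way of spinor genera. Since $\alge$ is indefinite, strong approximation identifies the set of conjugacy classes in $\mathbb{O}_I$ with a finite abelian group, namely the spinor class group $\mathrm{Cl}(F/K)$ of $\mathbb{O}_I$, where $F$ is the corresponding spinor class field. An order $\Da\in\mathbb{O}_I$ optimally contains a conjugate of $\Ha$ if and only if, at every finite place $\wp$, the local order $\Da_\wp$ optimally contains some $\alge_\wp^\times$-conjugate of $\Ha_\wp$, thanks to the branch description of optimality in \S 3. One concludes that the set $\mathcal{R}(\Ha)$ of conjugacy classes admitting an optimal representation of $\Ha$ is either empty or a coset of a subgroup $H\subseteq\mathrm{Cl}(F/K)$, whose index is controlled by the images of the stabilizers of the ``optimal branch loci'' at each place, and the task is to identify $H$ in each case.

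For case (1), $L\cong K\times K$, a place-by-place analysis on the Bruhat--Tits tree $\mathcal{T}_\wp$ shows that the set of Eichler orders of level $\wp^{\alpha_\wp}$ optimally containing $\Ha_\wp$ surjects onto the local contribution to $\mathrm{Cl}(F/K)$, so $H$ is the whole group and every conjugacy class works. For case (2), $L\cong K[x]/(x^2)$, the optimal local set is a single orbit of the normalizer of $\Ha_\wp$ and contributes trivially to $\mathrm{Cl}(F/K)$, so $H$ collapses to the trivial subgroup and $\mathcal{R}(\Ha)$ reduces to a single class.

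Case (3), with $L=K[\sqrt d]$ a field, is the substantive one and adapts the Chinburg--Friedman argument to non-maximal $\Ha$. At each $\wp$ one describes the branches of $\Ha_\wp$ in $\mathcal{T}_\wp$ according to whether $\wp$ is split, inert, or ramified in $L/K$, and according to the conductor of $\Ha_\wp$ in the maximal order of $L_\wp$; optimality at level $\wp^{\alpha_\wp}$ picks out a distinguished subset of edges. Computing the image of the local stabilizers, one finds $H=\mathrm{Cl}(F/K)$ unless the various local contributions multiply to a non-trivial quadratic character, which happens exactly under conditions (a)--(c): (a) is the embedding condition ensuring $L\hookrightarrow\alge$ with the quaternion-algebra ramification needed for a non-trivial spinor class field, (b) forces that field to be $LK$, and (c) records the fact that an Eichler order of level $\wp^{\alpha_\wp}$ separates the two local branch classes of $\Ha_\wp$ only when $\alpha_\wp$ is odd and $L/K$ splits at $\wp$. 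Under (a)--(c), $H$ has index~$2$ in $\mathrm{Cl}(F/K)$, giving the ``one half'' conclusion; otherwise $\mathcal{R}(\Ha)$ is everything.

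Finally, for the rank~$3$ statement the key observation is that a rank~$3$ order $\Ha_\wp$ is too rigid to admit more than a single local orbit of optimally containing Eichler orders of level $\wp^{\alpha_\wp}$: once its branches are placed in $\mathcal{T}_\wp$, the ambient edge is determined up to local conjugation. Hence the image of the local stabilizers in $\mathrm{Cl}(F/K)$ is trivial at every place, so $H$ is the trivial subgroup, and, assuming that some order of $\mathbb{O}_I$ represents $\Ha$ optimally, $\mathcal{R}(\Ha)$ is exactly one class. The main technical obstacle throughout is the branch bookkeeping in case~(3) when $\alpha_\wp$ is large and $\Ha_\wp$ is a non-maximal order of $L_\wp$: the branch subtree then acquires positive depth, and its shape depends delicately on the parity of $\alpha_\wp$ and on the conductor of $\Ha_\wp$, so the counting of orbits under the local normalizer requires careful case analysis.
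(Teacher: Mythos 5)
Your global framework is essentially the paper's: conjugacy classes in $\mathbb{O}_I$ are measured by a Galois-group-valued distance, and the classes optimally representing $\Ha$ are cut out by a class field $F_{\mathrm{op}}$ determined by the local norm groups $H_{\mathrm{op},\wp}=\{N(a)\mid\Ha_\wp\textnormal{ optimal in }a\Da_\wp a^{-1}\}$. But the substance of the theorem is the computation of these local groups, and there your proposal has genuine gaps rather than omitted routine details. First, the subgroup/coset structure of your set $\mathcal{R}(\Ha)$ is not automatic: one must check that $H_{\mathrm{op},\wp}$ is a group, which the paper does via $H_{\mathrm{op},\wp}H_\wp=H_{\mathrm{op},\wp}$ and $[K_\wp^*:\oink_\wp^*K_\wp^{*2}]=2$. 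Second, your justification in case (2) is false: the Eichler orders of level $r$ optimally containing $\oink_\wp[\nu]$, $\nu$ nilpotent, do \emph{not} form a single orbit under the normalizer of $\Ha_\wp$ (for $r\geq2$ there are $\left[\frac{r+2}2\right]\geq2$ such orbits, distinguished by the position of the deepest vertex of the path; this is the content of $e_4$ in Theorem 1.2). What actually forces $H_{\mathrm{op},\wp}=H_\wp$ is a parity statement: optimality means $\mathfrak{S}_0(\Ea)$ meets an endpoint of the infinite leaf $\mathfrak{S}_0(\Ha_\wp)$, and any two endpoints of an infinite leaf (or of a ball) are at even distance (Proposition 3.2), which pins down $v\big(N(a)\big)$ modulo $2$. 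That same parity lemma is exactly what settles the inert, even-level places in case (3) for non-maximal $\Ha$ --- the case you flag as a ``technical obstacle'' and leave unresolved; without it you can only conclude $F_{\mathrm{op}}\subseteq L$ under (a)--(c), not the equality needed for the ``one half'' statement.

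Third, the rank-3 argument rests on an unproved rigidity claim. It is not clear that placing the branch ``determines the ambient edge up to local conjugation'': the relevant orbit count is $e_4=1$, which is itself a theorem proved later by a cross-ratio computation, and even granting a single orbit you would still need $N\big(\mathrm{Stab}(\Ha_\wp)\big)\subseteq\oink_\wp^*K_\wp^{*2}$, which you do not address. The paper's route is shorter: an optimal embedding of the rank-3 order $\oink_\wp[a,c]$ induces an optimal embedding of its nilpotent suborder $\oink_\wp[c]$ (Lemma 3.2), so the rank-3 case reduces to case (2). Finally, in case (3) the ``only if'' direction comes not from a stabilizer computation but from the chain $F_{\mathrm{op}}\subseteq F_{\mathrm{max}}\subseteq L$ together with the explicit formula for $F_{\mathrm{max}}$ and the requirement $L\subseteq\Sigma$; your gloss of condition (c) as a separation phenomenon occurring ``when $\alpha_\wp$ is odd'' has the logic inverted --- odd local level forces $\Sigma$, hence $F_{\mathrm{op}}$, to split at $\wp$, so (c) is a necessary condition for $F_{\mathrm{op}}=L$, while the actual separation of conjugacy classes happens at even-level inert places.
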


As in  \cite{Macla}, when optimal selectivity does occur, we describe the set of orders optimally representing $\Ha$,
 in terms of a Galois group valued distance between two isomorphism classes of orders (\S2). 
Note that condition (3b) implies that
 the quaternion algebra  $\left(\frac{-1,d}{K_\wp}\right)$ splits at every finite place $\wp$ \cite{ohm}. We conclude that,
 when $I$ is square free, the conditions in (3) reduce to the selectivity conditions
described in \cite{Macla}. When $L$ is  three-dimensional, the condition for existence of one optimal embedding
is given in local terms in Theorem 1.3 below. 

To fix ideas, let $k$ be a local field with ring of integers $\oinki$, let $\Ea\subseteq\matrici_2(k)$ be an
 Eichler order,  and let $\Ha$ be a suborder of $\Ea$. Let $X$ be the set of optimal embeddings 
$\phi:\Ha\rightarrow\Ea$ and let 
$Y$ be the set of optimal suborders of $\Ea$ that are isomorphic to $\Ha$. 
Let $\Gamma_1=k^*\Ea^*$, where $A^*$ denotes the group of units of a ring $A$,
 and let $\Gamma_2$ be the normalizer\footnote{In this work, normalizer is used as a synonym of stabilizer by 
conjugation, since $\Ea$ is not a subgroup of $\mathrm{GL}_2(k)$.} of $\Ea$ in $\mathrm{GL}_2(k)$.
 It is well known that the quotient $\Gamma_2/\Gamma_1$ has one element if
$\Ea$ is maximal and two elements otherwise. 
By the embedding number of $\Ha$ into $\Ea$ we mean any of the following quantities:
$$e_1=|X/\Gamma_1|,\quad e_2=|X/\Gamma_2|,\quad e_3=|Y/\Gamma_1|,\quad e_4=|Y/\Gamma_2|.$$
We use the vector $\stackrel{\rightarrow}e=(e_1,e_2,e_3,e_4)$ to simplify the statements below.

For any order $\Ha$, and any possitive integer $t$, we define 
$\Ha^{[t]}=\oinki\mathfrak{1}+\pi^t\Ha$, where $\oinki\mathfrak{1}\cong \oinki$
is the ring of integral scalar matrices and $\pi$ is a uniformizing parameter.
The complete list of orders in $\alge$ that are intersections of maximal orders is as follows 
\cite[Thm. 1.4]{eichler2}:
\begin{enumerate}
\item The ring $\oinki\mathfrak{1}$ itself.
\item The order generated by a nilpotent element.
\item An order spanning an algebra isomorphic to $k\times k$. These orders have the form $\Ha\cong(\oinki\times\oinki)^{[t]}$.
\item An order of the form $\Ha\cong\Ha_0^{[t]}$, where $t\geq0$ and
$\Ha_0=\lbmatrix {\oinki}{\oinki}0{\oinki}$ is the ring of integral upper triangular matrices.
\item An order of the form $\Ha\cong\Ea^{[t]}$, where $\Ea$ is an eichler order.
\end{enumerate}
As there is no non-trivial optimal embedding of an order of rank 4 into another, 
we do not consider the last case in what follows.
Throughout, we let $p$ be the cardinality of the residue field $\mathbb{K}=\oinki/\pi\oinki$, while 
$[t]$ is the largest integer not exceeding $t$.

\begin{theorem}\label{tb2}
Let $\Ea$ be an Eichler order of level $r$ and let $\Ha$ be the rank-2 order spanned by a nilpotent element. Then
$e_4=\left[\frac{r+2}2\right]$, $e_3=r+1$, and $e_1=p^{[r/2]}+p^{[(r-1)/2]}$, unless $r=0$ where $e_1=1$. Furthermore, $e_2=p^{[r/2]}$
if $r$ is even and $2\equiv 0\ (\textnormal{mod }\pi^{r/2})$, while $e_2=e_1/2$ otherwise.
\end{theorem}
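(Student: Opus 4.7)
The approach is to identify optimal embeddings of $\Ha$ into $\Ea$ with nilpotent matrices in $\Ea\setminus\pi\Ea$, and then count $\Gamma_i$-orbits via the Bruhat--Tits tree. Fix a generator $n_0$ of $\Ha$ with $n_0^2=0$; then any embedding $\phi:\Ha\to\matrici_2(k)$ is determined by $m=\phi(n_0)$, which is nilpotent. Since $\mathrm{tr}(a+bm)=2a$ and $\det(a+bm)=a^2$ both lie in $\oinki$ whenever $a+bm\in\Ea$, one checks that $\phi$ is optimal exactly when $m\in\Ea\setminus\pi\Ea$. Consequently $Y$ is in bijection with $\mathbb{P}^1(k)$ via $\ell\mapsto\oinki+\oinki m_\ell^0$, where $m_\ell^0$ denotes a minimal nilpotent with kernel $\ell$, and $X\to Y$ is an $\oinki^*$-torsor.

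Identifying $\mathbb{P}^1(k)$ with the boundary of the Bruhat--Tits tree, the order $\Ea$ corresponds to a path $P=[v_0,\dots,v_r]$ of length $r$ fixed pointwise by $\Gamma_1=k^*\Ea^*$. By the branch analysis of Section~3, each end $\ell$ has a well-defined projection $v_{i(\ell)}$ onto $P$, and $\Gamma_1$ is transitive on ends with a given projection; this yields $e_3=r+1$. The nontrivial coset of $\Gamma_2/\Gamma_1$ is represented by $\tau=\lbmatrix{0}{1}{\pi^r}{0}$, which pairs $v_i$ with $v_{r-i}$, giving $e_4=[(r+2)/2]$.

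For $e_1$, the fibre of $X/\Gamma_1\to Y/\Gamma_1$ over $[v_i]$ has size $[\oinki^*:\sigma_i(N_i)]$, where $N_i$ is the $\Gamma_1$-normalizer of $\oinki[m_{\ell_i}^0]$ and $\sigma_i(g)$ is defined by $g m_{\ell_i}^0 g^{-1}=\sigma_i(g)m_{\ell_i}^0$. Choosing $\ell_i=k(e_1+\pi^i e_2)$, the stabilizer in $\Ea^*$ is parametrized by $g=\lbmatrix{a}{b}{c}{d}$ with $c=\pi^i(a-d)+\pi^{2i}b$ subject to $a\equiv d-\pi^i b\pmod{\pi^{r-i}}$, and one computes $\sigma_i(g)=(a+\pi^i b)/(d-\pi^i b)$. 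A direct calculation gives $\sigma_i(N_i)=\oinki^*$ at $i\in\{0,r\}$ and $\sigma_i(N_i)=1+\pi^{\min(i,r-i)}\oinki$ for interior $i$; summing the fibre sizes $1$ at the endpoints and $(p-1)p^{\min(i,r-i)-1}$ in the interior yields $e_1=p^{[r/2]}+p^{[(r-1)/2]}$ by a short telescoping, with $e_1=1$ in the degenerate case $r=0$.

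For $e_2$, the involution $\tau$ bijectively merges the fibre over $v_i$ with the fibre over $v_{r-i}$ whenever $i\neq r-i$, contributing $e_1/2$ orbits when $r$ is odd. When $r$ is even, $\tau$ fixes $v_{r/2}$, and a direct computation yields $\tau m_{\ell_{r/2}}^0\tau^{-1}=-m_{\ell_{r/2}}^0$, so $\tau$ acts on the middle fibre $\oinki^*/(1+\pi^{r/2}\oinki)$ by $u\mapsto-u$. This action is trivial iff $-1\in 1+\pi^{r/2}\oinki$, i.e.\ iff $2\equiv 0\pmod{\pi^{r/2}}$, in which case the middle fibre contributes its full $(p-1)p^{r/2-1}$ orbits and one obtains $e_2=p^{r/2}$; otherwise the action is free, the middle contributes $(p-1)p^{r/2-1}/2$, and the total is $e_2=e_1/2$. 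The main obstacle is the explicit determination of $\sigma_i(N_i)$, which requires a careful parametrization of the stabilizer constraints and a resolution of the equation $u(d-\pi^i b)=d+\pi^{r-i}e$; the dichotomy for $e_2$ then follows transparently from comparing $-1$ with the subgroup $1+\pi^{r/2}\oinki$ arising in the middle stabilizer.
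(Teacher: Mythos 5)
Your proof is correct, and it reaches the stated formulas by a genuinely different route than the paper. The paper never leaves the branch picture: it encodes an optimal pair $(\Ha,\Ea)$ by the position of the path $\mathfrak{S}_0(\Ea)$ inside the infinite leaf $\mathfrak{S}_0(\Ha)$, obtains $e_3$ and $e_4$ from the location $r-i$ of the deepest vertex, and for $e_1$, $e_2$ normalizes the leaf by a M\"obius transformation and invokes the cross-ratio machinery of \S5 to show that the class $\overline{cu^{-1}\pi^{i}}\in(\oinki/\pi^{r-i}\oinki)^*$ is a complete conjugacy invariant, whence $e_1=2n-n'$ with $n=p^{[r/2]}$. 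You instead linearize the problem: optimal embeddings are exactly the nilpotents of $\Ea\setminus\pi\Ea$, so $Y\cong\mathbb{P}_1(k)$ and $X\to Y$ is an $\oinki^*$-torsor; $e_3$ and $e_4$ become the count of $\Ea^*$-orbits of ends (your projection $v_{i(\ell)}$ is the same invariant as the paper's deepest vertex, read from the end of the leaf rather than from the branch), and $e_1$, $e_2$ come from the image of the stabilizer character $\sigma_i$ rather than from a cross ratio. I verified the key steps: $\det g=(a+\pi^ib)(d-\pi^ib)$ forces both factors to be units, $\sigma_i(N_i)$ equals $\oinki^*$ at the two extreme vertices and $1+\pi^{\min(i,r-i)}\oinki$ in the interior, the fibre sizes sum to $p^{[r/2]}+p^{[(r-1)/2]}$, and $\tau m\tau^{-1}=-m$ reproduces the paper's dichotomy, since $-1\in1+\pi^{r/2}\oinki$ exactly when $2\equiv0\ (\mathrm{mod}\ \pi^{r/2})$. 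Your approach buys independence from \S5 --- no cross ratios, only orbit--stabilizer counting --- at the price of exploiting a feature special to this theorem, namely that conjugation acts by a character on the one-dimensional space of nilpotents with a fixed kernel, whereas the paper's method is the uniform toolkit for Theorems 1.2--1.4. Two points are worth tightening: the transitivity of $\Gamma_1$ on the ends projecting to a fixed vertex of $P$ should be checked explicitly (at the two extreme vertices the fibre is a horoball-like set rather than a single sphere, and transitivity there requires the full unipotent part $\lbmatrix{1}{\oinki}{0}{1}$ of $\Ea^*$, not just the diagonal torus), and the integrality of $a$ for $a+bm\in\Ea$ should be deduced from $\det(a+bm)=a^2\in\oinki$ alone, as the trace $2a$ fails to determine $a$ in residue characteristic $2$.
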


\begin{theorem}\label{tb3}
If $\Ea$ is an Eichler order of level $r$, the order $\Ha=\lbmatrix \oinki\oinki0\oinki^{[t]}$ embeds optimally into $\Ea$ if and only if $r\geq 2t$ and in this case
$$\stackrel{\rightarrow}e=
\left\{\begin{array}{rcl}\Big((p-1)p^{2t-1},\frac12(p-1)p^{2t-1},1,1\Big)&\textnormal{ if }&r=2t\\
\Big(2(p-1)p^{2t-1},(p-1)p^{2t-1},2,1\Big)&\textnormal{ if }&r>2t\end{array}\right.,$$
unless $t=0$, where $\stackrel{\rightarrow}e=(1,1,1,1)$ if $r=0$ and $\stackrel{\rightarrow}e=(2,1,2,1)$ otherwise.
\end{theorem}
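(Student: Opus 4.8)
The plan is to reduce everything to the position of a single branch point on the Bruhat--Tits tree $\mathfrak{T}$ of $\mathrm{PGL}_2(k)$. After conjugating, I would take $\Ea=\lbmatrix\oinki\oinki{\pi^r\oinki}\oinki$, the pointwise stabilizer of the geodesic spine $P=[v_0,\dots,v_r]$ joining $v_0=[\oinki^2]$ to $v_r=[\oinki\varepsilon_1+\pi^r\oinki\varepsilon_2]$. Then $\Gamma_1=k^*\Ea^*$ is exactly the pointwise stabilizer of $P$ in $\mathrm{GL}_2(k)$ modulo scalars, and for $r>0$ the nontrivial class of $\Gamma_2/\Gamma_1$ is represented by the flip $v_i\mapsto v_{r-i}$ reversing $P$. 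Writing $L=k\Ha$, the algebra $\phi(L)$ is a Borel subalgebra, hence the stabilizer of a single end; so an embedding $\phi$ is recorded by its fixed end $\ell_s=k(\varepsilon_1+s\varepsilon_2)$ (together with a complementary line and a scaling of the nilpotent part), while $\mathrm{Aut}(L)\cong k^*$ acts by rescaling that nilpotent and $\mathrm{Aut}(\Ha)\cong\oinki^*$.

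The key step, carried out with the branch calculus of \S3, is to compute the isomorphism type of $\phi(L)\cap\Ea$ in terms of where the ray toward $\ell_s$ meets $P$. The geodesic from $v_0$ toward $\ell_s$ runs along the spine as far as $v_m$, where $m=v(s)$, and then leaves $P$ transversally; a direct index computation on the two eigenvalue projections then gives $\phi(L)\cap\Ea\cong\Ha_0^{[\min(m,\,r-m)]}$. Optimality $\phi^{-1}(\Ea)=\Ha$ therefore holds if and only if $\min(m,r-m)=t$, that is, if and only if $r\ge 2t$ and the branch point is $v_t$ or $v_{r-t}$; when $r<2t$ one has $\min(m,r-m)\le\lfloor r/2\rfloor<t$ for every $s$, so no optimal embedding exists. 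Proving this isomorphism-type formula is the part I expect to be the main obstacle.

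Granting it, the suborder counts follow from the symmetry of $P$. For $t\ge 1$ an optimal suborder branches off $P$ transversally at the interior vertex $v_t$ or $v_{r-t}$, where $\Gamma_1$ acts transitively on the $p-1$ transverse directions, so each admissible vertex contributes a single $\Gamma_1$-orbit: when $r>2t$ the vertices $v_t\ne v_{r-t}$ give two orbits interchanged by the flip, so $e_3=2$ and $e_4=1$, whereas when $r=2t$ the midpoint $v_t$ is flip-fixed, so $e_3=e_4=1$. For $t=0$ the admissible ends are instead the two axial ends of $P$; these are $\Gamma_1$-fixed and swapped by the flip when $r>0$ (giving $e_3=2$), but become conjugate under the larger group $k^*\mathrm{GL}_2(\oinki)$ when $r=0$ (giving $e_3=1$), and $e_4=1$ throughout.

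Finally I would pass from suborders to embeddings. For a fixed optimal $\Ha'$ the embeddings onto it form a torsor under $\mathrm{Aut}(\Ha)\cong\oinki^*$, and two are $\Gamma_1$-equivalent precisely when they differ by an element of $\Gamma_1$ normalizing $\Ha'$; the stabilizer computation shows that these realize exactly the automorphisms scaling the nilpotent by $1+\pi^{2t}\oinki$. Hence for $t\ge 1$ the embeddings onto $\Ha'$ modulo $\Gamma_1$ form a torsor under $\oinki^*/(1+\pi^{2t}\oinki)\cong(\oinki/\pi^{2t}\oinki)^*$, of size $(p-1)p^{2t-1}$, so $e_1=e_3\,(p-1)p^{2t-1}$; since the flip reverses the nilpotent direction it acts without fixed points on $X/\Gamma_1$, whence $e_2=e_1/2$ for $r>0$. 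For $t=0$ the realized automorphisms exhaust $\oinki^*$, so $e_1=e_3$ and the exceptional values are read off directly. Besides the isomorphism-type formula, the second delicate point is this last stabilizer computation, since it is the emergence of the modulus $\pi^{2t}$ that produces the factor $(p-1)p^{2t-1}=|(\oinki/\pi^{2t}\oinki)^*|$.
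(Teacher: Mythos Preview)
Your treatment of the existence condition $r\geq 2t$ and of $e_3,e_4$ is sound and is essentially the paper's argument in different coordinates: the paper normalizes the thick ray and applies Proposition~\ref{triplets} to the triple $(\text{end},v_0,v_r)$, while you normalize $\Ea$ and look at the branch point $v_m$; both give the same orbit count.

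The gap is in your passage from $e_3$ to $e_1$. You assert $\mathrm{Aut}(L)\cong k^*$ and $\mathrm{Aut}(\Ha)\cong\oinki^*$, acting only by scaling the nilpotent. This is false: $L$ also has the automorphisms $e_1\mapsto e_1+cn$, $e_2\mapsto e_2-cn$, $n\mapsto n$ for $c\in k$ (these are inner, induced by $\lbmatrix 1{c}01$), so in fact $\mathrm{Aut}(L)=\mathrm{Inn}(L)\cong k^*\ltimes k$ and $\mathrm{Aut}(\Ha)\cong\oinki^*\ltimes\oinki$. Consequently the fibre of $X\to Y$ over a fixed $\Ha'$ is \emph{not} an $\oinki^*$-torsor: once $\Ha'$ (equivalently the end $\ell_s$) is fixed, an embedding still depends on the complementary line \emph{and} the nilpotent scaling, a two-parameter family.

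With the correct groups the torsor argument does go through: one checks $\Gamma_1\cap N(\Ha')=k^*(\Ha')^*$, whose image in $\mathrm{Aut}(\Ha')$ is exactly $\mathrm{Inn}(\Ha')$, and then
\[
|X_{\Ha'}/\mathrm{Stab}_{\Gamma_1}(\Ha')|=[\mathrm{Aut}(\Ha'):\mathrm{Inn}(\Ha')]
=[\oinki^*:1+\pi^t\oinki]\cdot[\oinki:\pi^t\oinki]=(p-1)p^{2t-1}.
\]
So your claimed index $(p-1)p^{2t-1}$ is correct, but it is \emph{not} $[\oinki^*:1+\pi^{2t}\oinki]$ as you say; the two numbers merely happen to coincide, and indeed $\mathrm{Out}(\Ha)$ is non-abelian for $t\geq 2$, so no identification with $(\oinki/\pi^{2t}\oinki)^*$ is possible. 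The same mis-identification undermines your fixed-point argument for $e_2$ when $r=2t$: the flip does not simply ``reverse the nilpotent'', it also moves the complementary line, and one needs a genuine computation (the paper shows the flip sends the invariant $(x,y)$ to $(1-x,1-y)$, which can never equal $(x,y)$ under the constraint $x\not\equiv y\bmod\pi$).

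The paper sidesteps the automorphism-group subtlety by parametrizing embeddings directly: it records $\phi$ by the \emph{pair} of ends $(a,c)$ corresponding to $\ker\phi\!\lbmatrix1000$ and $\ker\phi\!\lbmatrix1100$, then uses the hull/cross-ratio machinery of \S5 to show that $(\bar{ad^{-1}},\bar{cd^{-1}})\in(\oinki/\pi^t\oinki)^2$ with $x\not\equiv y\bmod\pi$ is a complete $\Gamma_1$-invariant, immediately yielding the factor $(p-1)p^{2t-1}$.
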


\begin{theorem}\label{tb1}
Let $\Ea$ be an Eichler order of level $r>0$ and let $\Ha\subseteq\Ea$ be an order isomorphic to $(\oinki\times\oinki)^{[t]}$.  For any triple $(r,u,t)\in(\mathbb{Z}_{\geq0})^3$ 
satisfying $v\leq u\leq [r/2]$ for $v=\max\{0,r-t\}$, consider the cardinality
$$\chi(r,u,t)=\left|\left\{ \bar a\in\left(\frac{\oinki}{\pi^{t-r+2u}\oinki}\right)^*\Bigg|\bar a^2=
\bar1, |a-1|=|\pi|^{t-r+u}\right\}\right|,$$
which we set as $1$ for $u=0$.
Then $$\stackrel{\rightarrow}e=n\left(2,1,1,\frac12\right)-n'\left(1,\frac12,\frac12,0\right)+\frac12\stackrel{\rightarrow}\chi,$$
 where $n$, $n'$, and $\stackrel{\rightarrow}\chi=(0,\chi_2,\chi_3,\chi_4)$ are as in Table 1. If $r=0$, then
$\stackrel{\rightarrow}e=(1,1,1,1)$.
\end{theorem}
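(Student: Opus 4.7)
The plan is to use the branch-theoretic interpretation of optimality from \S3 to translate the count into a combinatorial problem on the Bruhat-Tits tree $\mathfrak{T}$ of $\mathrm{GL}_2(k)$. An embedding $\phi\colon L\to\matrici_2(k)$ of $L=k\times k$ is determined by the ordered pair of ends $(\xi_1,\xi_2)$ of $\mathfrak{T}$ corresponding to the two primitive idempotents; write $A(\phi)$ for the bi-infinite geodesic axis they span. The Eichler order $\Ea$ of level $r$ corresponds to a path $P(\Ea)$ of length $r$ in $\mathfrak{T}$. I would first show, using the explicit description of $\Ha=(\oinki\times\oinki)^{[t]}$ as scalars plus $\pi^t$ times the diagonal, that the condition $\phi^{-1}(\Ea)=\Ha$ is equivalent to a purely geometric statement: $A(\phi)$ must meet $P(\Ea)$ in a subpath of some prescribed length $2u$, with $\max\{0,r-t\}\le u\le [r/2]$, where the parity and endpoints of the overlap encode the ``optimality slack'' $t$.

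Next I would fix $\Ea$ and, for each admissible $u$, enumerate the $\Gamma_1$-orbits of ordered pairs $(\xi_1,\xi_2)$ whose axis realises the overlap $u$ with $P(\Ea)$. This reduces to counting pairs of branches at the two ends of a specified subpath of $P(\Ea)$; those counts, together with how they split further when the overlap symmetric position is distinguished from the generic one, are what the integers $n$ and $n'$ in Table 1 package. The contribution of each $u$ to $e_1$ is $2n-n'$ since each unordered pair gives two ordered ones except on the symmetric locus. Division by $\Gamma_2/\Gamma_1$, the end-swap involution, accounts for the $\tfrac12$-coefficients in the vectors $(2,1,1,\tfrac12)$ and $(1,\tfrac12,\tfrac12,0)$, giving $e_2$ up to correction by the fixed points of the swap.

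The key analytic input is the $\vec{\chi}$ correction. The swap $\xi_1\leftrightarrow\xi_2$ fixes $(\phi,\Ea)$ (resp.\ the suborder $\phi(\Ha)$) precisely when an element of $\Gamma_2\setminus\Gamma_1$ implementing the swap conjugates $\phi$ to itself (resp.\ preserves $\phi(\Ha)$ setwise). Writing such a conjugation as an anti-diagonal matrix and combining it with a diagonal automorphism, the fixed-point condition becomes the existence of $a\in(\oinki/\pi^{t-r+2u}\oinki)^*$ with $a^2=1$ and $|a-1|=|\pi|^{t-r+u}$, which is exactly $\chi(r,u,t)$. Distinguishing the four embedding numbers then amounts to a Burnside-type argument for the $\Gamma_2/\Gamma_1$-action on $X$ and on $Y$: the entries $\chi_2,\chi_3,\chi_4$ of $\vec{\chi}$ track how these fixed points contribute to $e_2$, $e_3$, $e_4$ respectively, while $e_1$ needs no such correction.

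The main obstacle will be the careful bookkeeping at the boundary cases $u=0$, $u=[r/2]$, and $r=2t$, where several effects collapse: the axis can be tangent to $P(\Ea)$ or pass symmetrically through its midpoint, the end-swap involution can fix the axis globally, and the isomorphism $\Ha\cong(\oinki\times\oinki)^{[t]}$ admits extra automorphisms that identify distinct optimal suborders. The vector formula $\vec{e}=n(2,1,1,\tfrac12)-n'(1,\tfrac12,\tfrac12,0)+\tfrac12\vec{\chi}$ is precisely the clean way to record these three effects uniformly; once the geometric dictionary and the $\chi$-calculation are in place, summing over $u$ and matching with the definitions of $n$ and $n'$ in Table~1 is a mechanical verification. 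The case $r=0$ is separate because $\Gamma_1=\Gamma_2$, making the Burnside correction trivial and forcing $\vec{e}=(1,1,1,1)$.
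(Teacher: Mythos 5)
Your overall strategy is the same as the paper's: translate optimality into the branch language of \S3 (the axis of $\phi$ together with its $t$-neighbourhood versus the length-$r$ path $\mathfrak{S}_0(\Ea)$), count configurations up to the M\"obius action using the cross-ratio rigidity of \S5, and correct the naive halvings by fixed points of involutions, which is where $\stackrel{\rightarrow}\chi$ comes from. Two points in your dictionary would, however, derail the bookkeeping if carried out literally. First, $u$ is not half the length of the overlap of the axis $A(\phi)$ with $P(\Ea)$: when $r>2t$ every optimal configuration meets the axis in a segment of length $r-2t$ (and no $u$ appears in that row of Table 1), while for $r\le 2t$ the path need not meet the axis at all. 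The correct invariant is the length of the shorter arm of $P(\Ea)$ about its unique deepest vertex inside the thick line $\mathfrak{S}_0\big(\phi(\oink_L)\big)$, the constraint $u\geq r-t$ expressing that this vertex has depth at most $t$; the paper's counts $n$ and $n'$ (paths of length $r$ issuing from a fixed optimal vertex, resp.\ those with two optimal endpoints, yielding $e_1=2n-n'$) rest on this parametrisation, so the error is not cosmetic.

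Second, you conflate two distinct involutions. Passing from $e_1$ to $e_2$ quotients by $\Gamma_2/\Gamma_1$, which reverses $\mathfrak{S}_0(\Ea)$ but preserves the oriented axis on any fixed orbit; passing from $e_1$ to $e_3$ quotients by $\mathrm{Aut}(L)$, which swaps $\xi_1\leftrightarrow\xi_2$ while an element of $\Gamma_1$ realising it fixes $\mathfrak{S}_0(\Ea)$ pointwise. Their fixed-point counts are genuinely different --- in the paper they are the congruences $[a,b;d,c]\equiv[a,b;c,d]\ (\mathrm{mod}\ \pi^{t})$, which moreover forces both endpoints of $P(\Ea)$ to be optimal, and $[b,a;c,d]\equiv[a,b;c,d]\ (\mathrm{mod}\ \pi^{t-r+2u})$, which does not; this is why $\chi_2\neq\chi_3$ for $r<2t$ in Table 1 --- and $e_4$ requires composing the two, whence $\chi_4=\frac12(\chi_2+\chi_3)$ rather than a single Burnside count. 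Your reduction of the fixed-point condition to $a^2=1$ with $|a-1|=|\pi|^{t-r+u}$ is the right endpoint, but it must be run separately for each involution, with the appropriate modulus and the appropriate restriction on which configurations each involution can fix. Finally, the injectivity you need (distinct normal forms are non-conjugate) is exactly Corollary 5.1, which your plan uses implicitly but should cite as the rigidity input.
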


\begin{table}
\scriptsize
\begin{tabular}{|c|c|c|c|c|c|}
\hline
$r$&$n$&$n'$&$\chi_2$&$\chi_3$&$\chi_4$\\
\hline\hline
$r=2h+1<2t$&$p^h$&$0$
&$0$&$\sum_{u=v}^{h}\chi(r,u,t)$&$\frac{\chi_3}2$\\ \hline
$r=2h<2t$&$p^h$&$(p-1)p^{h-1}$
&$\chi(r,h,t)$&$\sum_{u=v}^{h}\chi(r,u,t)$&$\frac{\chi_2+\chi_3}2$\\ \hline
$r=2t$&$p^t$&$(p-2)p^{t-1}$&
$\chi(r,t,t)$&$\chi_2$&$\chi_2$\\   \hline
$r>2t$&$2p^t$&$2(p-1)p^{t-1}$
&$0$&$0$&$0$\\
\hline
\end{tabular}
\normalsize
\caption{The invariants $n$, $n'$, and $\chi_i$ for the order $(\oinki\times\oinki)^{[t]}$.}

\end{table}

Our computations are greatly simplified by the explicit description of the branch of an order defined in \cite{eichler2},
see \S3. An additional simplification is obtain by translating our setting, back an forth, between two
known incarnations of the Bruhat-Tits tree. This is done in \S5. Although $e_1$ and $e_2$ were previously
computed in \cite[Corollary 1.6]{enumbers} and \cite[Theorem 1.8]{enumbers}, we noted a discrepancy with the values given there, where it is incorrectly stated
that there is a unique embedding when $r$ is small with respect to $t$.

Theorems 1.2-1.4 extend easily to the setting where $\Ea$ is an intersection of maximal orders.
Since embedding an order $\Ha$ into a full order $\Ea\subseteq\alge$  is equivalent to embedding 
$\Ha^{[t]}$ into $\Ea^{[t]}$ for any positive integer $t$, this case reduces easily to the above results,
as any full-rank intersection of maximal orders has the form $\Ea^{[t]}$ for an Eichler order $\Ea$
(cf. \cite[Th. 1.4]{eichler2}).

\section{Optimal representation fields}

In this section $\Pi$ denotes the set of all places in $K$, archimedean or otherwise. 
Let $\alge$ be an indefinite quaternion $K$-algebra, 
and let $\mathbb{O}=\mathrm{gen}(\Da)$ be a genus of orders of maximal rank 
in $\alge$, or as we say
in all that follows, full orders in $\alge$.

For any full order $\Da\subseteq\alge$, we define the adelization $\Da_{\ad}=\prod_{\wp\in\Pi}\Da_\wp$
endowed with the product topology, where by convention $\Da_\wp=\alge_\wp$  at archimedean places.
 The adelization $\alge_\ad$ is the set of all elements $a\in\prod_{\wp\in\Pi}\alge_\wp$ satisfying $a_\wp\in
\Da_\wp$ for almost all $\wp$, endowed with the only topology making every affine map of the form
$d\mapsto d+b$ an open embedding of
$\Da_\ad$ into $\alge_\ad$.  
Adelizations of other orders and algebras are defined analogously.
 In particular, we use the standard notations $\ad:=K_\ad$, $J_K:=\ad^*$, and $J_\alge:=\alge_\ad^*$, 
where $R^*$ is the group of units of the ring $R$.
We identify $K^*$ with a subset of $J_K$ via the 
diagonal embedding, and we let $N:J_\alge\rightarrow J_K$ denote the reduced norm. By abuse of notation,
the symbol $N$ is also used for the reduced norm on the global algebra $\alge$ or its localization $\alge_\wp$.
If $a\in\alge_{\ad}$, the order $\Da'=a\Da a^{-1}$ is defined by the local conditions $\Da'_\wp=a_\wp\Da_\wp a_\wp^{-1}$ at all finite places $\wp$.

 The spinor class field for the genus  $\mathbb{O}$ is the class field $\Sigma$ corresponding to the subgroup $K^*H(\mathbb{O})$ of the adele group $J_K$, where
$H(\mathbb{O})=H(\Da)=\{N(a)|a\Da a^{-1}=\Da\}$ is the spinor image. There exists a well defined distance map $\rho:\mathbb{O}\times \mathbb{O}\rightarrow\mathrm{Gal}(\Sigma/K)$, satisfying $\rho(\Da,a\Da a^{-1})=[N(a),\Sigma/K]$,
where $c\mapsto[c,\Sigma/K]$ is the Artin map on ideles \cite{cyclic}. When $\mathbb{O}=\mathbb{O}_I$ is a genus of Eichler orders, as in the introduction, the spinor class field $\Sigma$ is the largest exponent-$2$ sub-extension of the wide Hilbert class field for $K$ splitting at all finite places where the local valuation of $I$ is odd and at all infinite places splitting the algebra \cite[Theorem 1.2]{eichler2}.

 Assume $\Ha\subseteq\Da$ is optimal.  We define 
the representation field $F=F(\Da|\Ha)$,  the optimal representation field $F_{\mathrm{op}}=
F_{\mathrm{op}}(\Da|\Ha)$, and  the maximal representation
field $F_{\mathrm{max}}=F_{\mathrm{max}}(\alge|\Ha)$, as the class fields
 corresponding to the class groups $K^*H$,
$K^*H_{\mathrm{op}}$, and $K^*H_{\mathrm{max}}$, respectively, where
$$\begin{array}{lclcl}
H&=&H(\Da|\Ha)&=&\{N(a)|\Ha\subseteq a\Da a^{-1}\},\\
H_{\mathrm{op}}&=&H_{\mathrm{op}}(\Da|\Ha)&=&\{N(a)|\Ha\textnormal{ is optimal in }a\Da a^{-1}\},\\
 H_{\mathrm{max}} &=&H_{\mathrm{max}}(\alge|\Ha)&=&\{N(a)|a\Ha a^{-1}=\Ha\},\end{array}$$
 provided that they are subgroups of finite index in the adele group, otherwise we say that the corresponding
fields are undefined.

The order $\Ha$ embeds (respectively, embeds optimally) into a full order $\Da'\in\mathbb{O}$ if and only if  
$\rho(\Da,\Da')$ is the identity map on $F$  (resp. $F_{\mathrm{op}}$), when this field is defined. This is proved in
\cite{spinor} for $F$ and the proof for $F_{\mathrm{op}}$ is entirely analogous. The field $F_{\mathrm{max}}$,
if defined, is an upper bound for either $F$ or $F_{\mathrm{op}}$, that is independent of the genus.

 The existence of the representation field $F$ for orders in quaternion algebras follows from
 general results on representation of quadratic forms \cite{Chan}. For the particular orders
that concern us here,  this follows from the results in \cite{eichler2}. 
 When $\Da$ is an Eichler order, and $\Ha$ is an order in a semisimple commutative algebra $L$, then both $F_{\mathrm{op}}$ and $F_{\mathrm{max}}$ are defined as we prove below.
The field $F_{\mathrm{max}}$ is not defined when $\Ha$ spans an algebra $L\cong K[x]/(x^2)$,
 or has rank $3$, since the corresponding class group is not of finite index.
When $L=K\Ha\cong K\times K$, the set $H_{\mathrm{max}}$ contains the set of norms of the adelization
$L_\ad^*\cong J_K\times J_K$, where the norm is surjective. Next result follows:
\begin{proposition}\label{p21}
If $\Ha$ is an order spanning an algebra isomorphic to $K\times K$, then $F_{\mathrm{max}}(\alge|\Ha)$ is defined
and equals $K$. 
\end{proposition}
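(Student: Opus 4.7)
The plan is to show directly that $H_{\mathrm{max}}=J_K$, so that the associated class group $K^*H_{\mathrm{max}}$ equals the full idele group $J_K$, making it trivially of finite index and giving class field $K$.

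First I would exploit commutativity: since $L=K\Ha\cong K\times K$ is abelian, for any $a\in L_\ad^*$ and any finite place $\wp$, the local component $a_\wp\in L_\wp^*$ commutes with every element of $\Ha_\wp\subseteq L_\wp$, so $a_\wp\Ha_\wp a_\wp^{-1}=\Ha_\wp$. By the local definition of adelic conjugation recalled in \S2, this gives $a\Ha a^{-1}=\Ha$, hence $L_\ad^*\subseteq\{a\in J_\alge\colon a\Ha a^{-1}=\Ha\}$. Applying the reduced norm produces the inclusion $N(L_\ad^*)\subseteq H_{\mathrm{max}}$.

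Next I would identify $L_\ad^*$ with $J_K\times J_K$ via the isomorphism $L\cong K\times K$. Under any embedding $L\hookrightarrow\alge$, the element corresponding to $(x,y)\in K\times K$ is diagonalizable with eigenvalues $x$ and $y$, so the restriction of the reduced norm to $L$ is the product map $(x,y)\mapsto xy$, whose adelic version $J_K\times J_K\to J_K$ is surjective (it admits the obvious section $c\mapsto(c,1)$). Thus $N(L_\ad^*)=J_K$, forcing $H_{\mathrm{max}}=J_K$ and consequently $K^*H_{\mathrm{max}}=J_K$.

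Since $[J_K:K^*H_{\mathrm{max}}]=1<\infty$, the field $F_{\mathrm{max}}$ is defined, and the class field corresponding to the full idele group is $K$ itself. The statement is really a short verification; the only point to check carefully is the place-by-place compatibility of the adelic conjugation definition with the commutativity of $L$, which is immediate, so no genuine obstacle arises.
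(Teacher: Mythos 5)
Your argument is correct and is essentially the paper's own proof: the authors likewise observe that $H_{\mathrm{max}}$ contains $N(L_\ad^*)$ with $L_\ad^*\cong J_K\times J_K$ and that the reduced norm is surjective there, so $K^*H_{\mathrm{max}}=J_K$ and the class field is $K$. You have merely written out the details (commutativity forcing $a\Ha a^{-1}=\Ha$, and the norm being the product map) that the paper leaves implicit.
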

It was proved in \cite{continuity} that, when $L\cong K(\sqrt d)$
is a field, $F_{\mathrm{max}}(\alge|\Ha)$ is the largest field of the form $F(\Da|\Ha)$, where $\Da$ runs over the set of full orders in $\alge$ containing $\Ha$, and it can be computed  as follows:
$$F_{\mathrm{max}}(\alge|\Ha)=\left\{\begin{array}{ccl} L&\textnormal{ if }&\alge\cong\left(\frac{-1,d}K\right)\\ 
 K&&\textnormal{ otherwise }\end{array}\right..$$
\begin{proposition}
The field $F_{\mathrm{op}}=F_{\mathrm{op}}(\Ea|\Ha)$ is defined whenever $\Ea$ is an Eichler order.
\end{proposition}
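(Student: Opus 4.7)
The plan is to prove the equality
\[K^*H_{\mathrm{op}}=K^*H(\mathbb O)\cdot N(J_{\alge,\Ha}),\]
where $J_{\alge,\Ha}:=\{a\in J_\alge:a\Ha a^{-1}=\Ha\}$ is the adelic normalizer of $\Ha$. The right-hand side is a subgroup of $J_K$: each of its two factors is a subgroup, and $K^*H(\mathbb O)$ already has finite index in $J_K$ by the spinor class field theorem \cite[Theorem~1.2]{eichler2}, so proving this equality will settle the proposition.

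The containment ``$\supseteq$'' is easy. On the one hand, under the standing hypothesis that $\Ha$ is optimal in $\Ea$, any $c\in J_\alge$ with $c\Ea c^{-1}=\Ea$ preserves the optimality, so $H(\mathbb O)\subseteq H_{\mathrm{op}}$. On the other hand, if $b\in J_{\alge,\Ha}$ then $b$ also normalizes $L=K\Ha$, whence $L\cap b\Ea b^{-1}=b(L\cap\Ea)b^{-1}=b\Ha b^{-1}=\Ha$, which shows $\Ha$ is optimal in $b\Ea b^{-1}$ and hence $N(J_{\alge,\Ha})\subseteq H_{\mathrm{op}}$.

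The containment ``$\subseteq$'' is the heart of the argument. Fix $a\in J_\alge$ with $\Ha$ optimal in $\Ea':=a\Ea a^{-1}$. At each place $\wp$, both $\Ea_\wp$ and $\Ea'_\wp$ are local Eichler orders in the same local genus that optimally contain $\Ha_\wp$. The plan is to produce, place by place, an element $d_\wp\in\alge_\wp^*$ satisfying $d_\wp\Ea_\wp d_\wp^{-1}=\Ea'_\wp$ and $d_\wp\Ha_\wp d_\wp^{-1}=\Ha_\wp$, chosen in $\Ea_\wp^*$ at almost every place so that the assembled $d:=(d_\wp)$ lies in $J_\alge$. Then $d\in J_{\alge,\Ha}$ and $d^{-1}a\in\mathrm{Stab}(\Ea)$, yielding the desired factorization $N(a)=N(d)\cdot N(d^{-1}a)\in N(J_{\alge,\Ha})\cdot H(\mathbb O)$.

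The main obstacle is this local construction of $d_\wp$: given two orders in the local genus of $\Ea_\wp$ that optimally contain $\Ha_\wp$, one must conjugate one into the other by an element normalizing $\Ha_\wp$. I would verify this case by case over the possible local forms of $\Ha_\wp$ listed in \cite[Thm.~1.4]{eichler2}, translating each case into a statement about branches in the Bruhat--Tits tree via the formalism of Section~3. The most delicate cases are those where $\Ea_\wp$ is a proper Eichler order and $\Ha_\wp$ is a strict suborder, since the branches then extend over several edges and the needed conjugator has to be found by exploiting the involutive symmetry coming from the nontrivial coset of $\Gamma_{2,\wp}/\Gamma_{1,\wp}$.
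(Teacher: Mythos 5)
Your proposal takes a genuinely different and much more ambitious route than the paper, and its key step fails. The paper's own proof never identifies $K^*H_{\mathrm{op}}$; it only needs to show that the local set $H_{\mathrm{op},\wp}$ is a group, and it gets this almost for free: $H_{\mathrm{op},\wp}$ is a union of cosets of $H_\wp(\Ea)\supseteq\oink_\wp^*K_\wp^{*2}$ (post-composing with a stabilizer of $\Ea_\wp$ preserves optimality), it contains $H_\wp$, and since $\oink_\wp^*K_\wp^{*2}$ has index $2$ in $K_\wp^*=N(\alge_\wp^*)$, the only possibilities are $\oink_\wp^*K_\wp^{*2}$ and $K_\wp^*$, both groups. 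That two-line index argument is the entire content of the proposition. You instead try to prove the exact identity $K^*H_{\mathrm{op}}=K^*H(\mathbb{O})\cdot N(J_{\alge,\Ha})$, which (translated into fields) amounts to $F_{\mathrm{op}}=F_{\mathrm{max}}\cap\Sigma$ --- essentially the content of Theorem 1.1, not of this proposition.

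The genuine gap is your local construction of $d_\wp$. You need the normalizer of $\Ha_\wp$ to act transitively on the set of local Eichler orders of a fixed level $r$ that optimally contain $\Ha_\wp$. The number of orbits of that action is exactly $e_4=|Y/\Gamma_2|$ (count $\mathrm{GL}_2(k)$-orbits of optimal pairs $(\Ha',\Ea')$ by fibering over either coordinate), and the paper computes $e_4>1$ in general: for instance $e_4=\left[\frac{r+2}{2}\right]$ in Theorem \ref{tb2}. Concretely, for $\Ha_\wp=\oinki\left[\lbmatrix 0100\right]$, whose branch is the infinite leaf consisting of all balls of radius $\geq 1$, the Eichler orders of level $2$ with branches $B_0^{[0]},B_0^{[-1]},B_0^{[-2]}$ and $B_0^{[0]},B_0^{[-1]},B_c^{[0]}$ (with $c\in\pi^{-1}\oinki\setminus\oinki$) both optimally contain $\Ha_\wp$, but no element normalizing $\Ha_\wp$ can carry one to the other, since such an element preserves the leaf and hence the depth profile of the path, and these profiles are $(0,1,2)$ versus $(0,1,0)$. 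So the element $d_\wp$ you want simply does not exist, and the factorization $N(a)=N(d)N(d^{-1}a)$ cannot be assembled. What your argument actually needs is only an equality of norm classes, not of orbits; that weaker statement appears to be consistent with Theorem \ref{t1}, but proving it requires the parity analysis of Proposition \ref{parity} and the case-by-case study of branches carried out for the main theorem --- far more than the proposition itself requires.
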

\begin{proof}
Define the local component
$$H_{\mathrm{op},\wp}=
H_{\mathrm{op},\wp}(\Da|\Ha)=\{N(a)|a\in\alge_\wp,\, \Ha_\wp\textnormal{ is optimal in }a\Da_\wp a^{-1}\},$$
while $H_\wp=H_\wp(\Da)$ is defined analogously. 
In fact, $H_\wp$ is always 
a group, since it is the image, under the norm, of a stabilizer.
If $\Da$ is an Eichler order, the group $H_\wp$ contains $\oink_\wp^*K_\wp^{*2}$, where $\oink_\wp^*$
is the group of local units. Since $[K_\wp^*:\oink_\wp^*K_\wp^{*2}]=2$ for every finite place, and 
$H_{\mathrm{op},\wp}H_\wp=H_{\mathrm{op},\wp}$,
 it follows that $H_{\mathrm{op,\wp}}$ is either $K_\wp^*$ or $\oink_\wp^*K_\wp^{*2}$,
whence it is a group containing $H_\wp$, and the field $F_{\mathrm{op}}\subseteq\Sigma$ is defined.
\end{proof}

\begin{remark}
All results in this section hold if $K$ is just assumed to be a global field and $\infty$ is replaced by a finite
set $S\subseteq\Pi$ containing the archimedean places, if any, and at least one place in $S$ splits $\alge$ \cite{abelianos}.
\end{remark}
  
\section{Local computations and trees}
 
Let $k$ be a local field with ring of integers $\oinki$. Recall that there is a one-to-one correspondence between maximal orders in $\matrici_2(k)$, or equivalently homothety classes of full rank lattices in $k^2$, and
vertices in the Bruhat-Tits tree for $PSL_2(k)$ \cite{trees}. We call it the  BT-tree $\mathfrak{T}=\mathfrak{T}(k)$ 
in all that follows, and let $\delta$ denote the usual distance on the graph $\mathfrak{T}$.
  Let $\Ha$ be an order in $\matrici_2(k)$, and let 
$S_0(\Ha)$ be the set of vertices corresponding to the maximal orders containing $\Ha$. Let $\mathfrak{S}_0(\Ha)$ be the
branch of $\Ha$, i.e., the largest subgraph of the BT-tree whose vertices are in $S_0(\Ha)$.
 It was proved in \cite{eichler2} that branches fall in a rather small family, they are usually the maximal subtree with 
vertices lying no farther than a fixed distance $p=p(\Ha)$, the depth of the order, from a path (Figure 1A), which we call the stem of the order. The stem can be an infinite ray (Figure 1B), a maximal path (Figure 1C), or be reduced to a point (Figure 1D).  Any of these sets is called a $p$-thick line. When the stem is reduced to a point,  $\mathfrak{S}_0(\Ha)$ is a ball of radius $p$. Vertices in the stem are called stem vertices, while non-stem vertices are called leaf vertices. If the stem is a ray or a non-trivial finite path, its endpoints are called stem borders. A thick line is a $p$-thick line for some $p$. The only order $\Ha$ for which $\mathfrak{S}_0(\Ha)$ is not a thick line, other that the trivial order $\oinki\mathfrak{1}$,
is the rank 2 order $\Ha=\oinki[c]$ where $c^2=0$. In this case $\mathfrak{S}_0(\Ha)$ is a graph
with only leaf vertices, called the infinite leaf (Figure 1E).
It can be obtained as a infinte union of balls of increasing radius with a common endpoint and centers lying on a ray \cite[Prop. 4.4]{eichler2}. The branch of an Eichler order is a finite path. A finite path can be described by a walk $v_0v_1v_2\dots v_n$ without backtracking, i.e., 
a sequence of vertices in $\mathfrak{G}$ without repetitions, where
any two consecutive vertices are neighbors. Similar conventions apply to rays or maximal paths.
The depth $p(v)$
of a vertex $v$ in a branch $\mathfrak{S}$,  is the radius of the largest ball with center
$v$ contained in $\mathfrak{S}$.  
It follows easily from the explicit description of branches that every path in a branch whose associated walk 
$v_0v_1v_2\dots v_n$ satisfy $p(v_0)=r$ and $p(v_1)=r-1$, must also satisfy  $p(v_k)=r-k$ for any $k=2,\dots,n$.
In particular $n\leq r$. Such a path is said to go outwards through the leaves. See \cite{eichler2} for details.

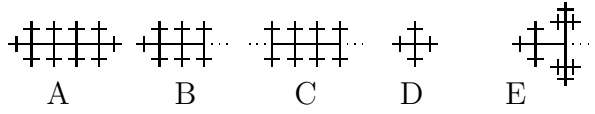
\begin{figure}\qquad
\unitlength 1mm 
\linethickness{0.4pt}
\ifx\plotpoint\undefined\newsavebox{\plotpoint}\fi 
\begin{picture}(90,13)(0,0)
\put(0,8){\line(1,0){15}}
\put(1,7){\line(0,1){2}}
\put(3,5){\line(0,1){6}}\put(2,10){\line(1,0){2}}\put(2,6){\line(1,0){2}}
\put(6,5){\line(0,1){6}}\put(5,10){\line(1,0){2}}\put(5,6){\line(1,0){2}}
\put(9,5){\line(0,1){6}}\put(8,10){\line(1,0){2}}\put(8,6){\line(1,0){2}}
\put(12,5){\line(0,1){6}}\put(11,10){\line(1,0){2}}\put(11,6){\line(1,0){2}}
\put(14,7){\line(0,1){2}}
\put(17,8){\line(1,0){9}}
\put(18,7){\line(0,1){2}}
\put(20,5){\line(0,1){6}}\put(19,10){\line(1,0){2}}\put(19,6){\line(1,0){2}}
\put(23,5){\line(0,1){6}}\put(22,10){\line(1,0){2}}\put(22,6){\line(1,0){2}}
\put(26,5){\line(0,1){6}}\put(25,10){\line(1,0){2}}\put(25,6){\line(1,0){2}}
\multiput(26,8)(1,0){4}{\line(1,0){.0625}}
\put(35,8){\line(1,0){9}}
\put(35,5){\line(0,1){6}}\put(34,10){\line(1,0){2}}\put(34,6){\line(1,0){2}}
\put(38,5){\line(0,1){6}}\put(37,10){\line(1,0){2}}\put(37,6){\line(1,0){2}}
\put(41,5){\line(0,1){6}}\put(40,10){\line(1,0){2}}\put(40,6){\line(1,0){2}}
\put(44,5){\line(0,1){6}}\put(43,10){\line(1,0){2}}\put(43,6){\line(1,0){2}}
\multiput(44,8)(1,0){4}{\line(1,0){.0625}}\multiput(35,8)(-1,0){4}{\line(1,0){.0625}}
\put(51,8){\line(1,0){6}}
\put(52,7){\line(0,1){2}}
\put(54,5){\line(0,1){6}}\put(53,10){\line(1,0){2}}\put(53,6){\line(1,0){2}}
\put(56,7){\line(0,1){2}}
\put(67,8){\line(1,0){7}}
\put(68,7){\line(0,1){2}}
\put(70,5){\line(0,1){6}}\put(69,10){\line(1,0){2}}\put(69,6){\line(1,0){2}}
\put(74,2.5){\line(0,1){11}}\put(72,11){\line(1,0){4}}\put(72,5){\line(1,0){4}}
\put(73,10){\line(0,1){2}}\put(75,10){\line(0,1){2}}
\put(73,4){\line(0,1){2}}\put(75,4){\line(0,1){2}}
\put(73,3.4){\line(1,0){2}}\put(73,12.6){\line(1,0){2}}
\multiput(75,8)(1,0){4}{\line(1,0){.0625}}
\put(5,0){A}\put(22,0){B}\put(38,0){C}\put(52,0){D}\put(66,0){E}
\end{picture}
\caption{Some branches of orders when $k=\mathbb{Q}_3$.} 
\end{figure}

 Two rays in a graph $\mathfrak{G}\subseteq\mathfrak{T}$ are equivalent if the 
corresponding walks  $v_0v_1v_2\dots$ 
and $v'_0v'_1v'_2\dots$ satisfy $v'_t=v_{t+s}$ for $s\in \mathbb{Z}$ fixed an every $t$ large enough.
An equivalence class of rays is called an end of $\mathfrak{G}$.
 Every end of 
$\mathfrak{G}$ corresponds to a unique end in $\mathfrak{T}$, and in this sense we say that $\mathfrak{G}$ contains
an end or a certain end $e$ is contained in $\mathfrak{G}$ (written $e\in\mathfrak{G}$). It is not
hard to see that each thick ray and each infinite leaf contains a unique end, while a thick maximal path contains two ends. We also make extensive use of the following result (cf. \cite[Prop. 2.4]{eichler2}):

\begin{proposition}\label{p31}
For any order $\Ha$, and any positive integer $t$, we have $$S_0\left(\Ha^{[t]}\right)=\Big\{
v\in V(\mathfrak{T})\Big| \delta(v,v')\leq t\textnormal{ for some }v'\in S_0(\Ha)\Big\}.$$
\end{proposition}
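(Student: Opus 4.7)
My plan is to translate everything into lattice language and then check both inclusions directly. Write $\Da_v=\mathrm{End}(L_v)$ for the maximal order attached to a vertex $v$ (with $L_v$ any lattice representing the homothety class). Since $\oinki\mathfrak{1}\subseteq\Da_v$ automatically, the condition $v\in S_0(\Ha^{[t]})$ is equivalent to $\pi^t\Ha\subseteq\Da_v$, that is, $h(\pi^tL_v)\subseteq L_v$ for every $h\in\Ha$, or equivalently $h(L_v)\subseteq\pi^{-t}L_v$. I will also use the standard fact that $\delta(v,v')\leq t$ in $\mathfrak{T}$ iff one can choose representatives with $\pi^tL_v\subseteq L_{v'}\subseteq L_v$ (the invariant-factor normal form for two lattices in $k^2$ gives this after a single homothety adjustment).

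For the inclusion ``$\supseteq$'', suppose $v'\in S_0(\Ha)$ and $\delta(v,v')\leq t$. Pick representatives with $\pi^tL_v\subseteq L_{v'}\subseteq L_v$. For any $h\in\Ha$, the assumption $\Ha\subseteq\Da_{v'}=\mathrm{End}(L_{v'})$ gives $h(L_{v'})\subseteq L_{v'}\subseteq L_v$, so
$$\pi^th(L_v)\subseteq h(L_{v'})\subseteq L_v,$$
proving $\pi^t\Ha\subseteq\Da_v$, i.e., $v\in S_0(\Ha^{[t]})$.

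For the inclusion ``$\subseteq$'', suppose $v\in S_0(\Ha^{[t]})$, so $h(L_v)\subseteq\pi^{-t}L_v$ for every $h\in\Ha$. Set
$$L_{v'}=\sum_{h\in\Ha}h(L_v).$$
Because $\Ha$ is a finitely generated $\oinki$-module, $L_{v'}$ is finitely generated; since $1\in\Ha$ we have $L_v\subseteq L_{v'}$, and the displayed inclusion gives $L_{v'}\subseteq\pi^{-t}L_v$. Hence $L_{v'}$ is a full-rank $\oinki$-lattice and $\pi^tL_{v'}\subseteq L_v\subseteq L_{v'}$, so $\delta(v,v')\leq t$. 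Finally, for any $h\in\Ha$,
$$h(L_{v'})=\sum_{h'\in\Ha}hh'(L_v)\subseteq\sum_{h''\in\Ha}h''(L_v)=L_{v'},$$
so $\Ha\subseteq\mathrm{End}(L_{v'})=\Da_{v'}$, i.e., $v'\in S_0(\Ha)$.

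The construction $L_{v'}=\sum_{h\in\Ha}h(L_v)$ is the only substantive step; once one trusts the lattice description of the BT-tree distance, everything else is formal. The one point worth double-checking is the distance characterization $\delta(v,v')\leq t\Leftrightarrow \pi^tL_v\subseteq L_{v'}\subseteq L_v$ for suitable representatives, which I would justify by invoking the $\oinki$-module normal form of the pair $(L_v,L_{v'})$ together with a homothety normalization.
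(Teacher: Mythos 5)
Your argument is correct. Note, however, that the paper itself gives no proof of this proposition: it is quoted verbatim from an earlier reference (Prop.~2.4 of the cited paper on Eichler orders and trees), so there is no in-paper argument to compare against. Your lattice-theoretic proof is the natural self-contained one. The two ingredients you rely on are both sound: the equivalence $v\in S_0\bigl(\Ha^{[t]}\bigr)\Leftrightarrow \pi^t\Ha\subseteq\Da_v$ (since $\oinki\mathfrak{1}$ lies in every maximal order), and the normal-form characterization $\delta(v,v')\leq t\Leftrightarrow \pi^tL_v\subseteq L_{v'}\subseteq L_v$ for suitable representatives, which follows from the elementary-divisor theorem for a pair of full lattices in $k^2$. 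The key step, taking $L_{v'}=\sum_{h\in\Ha}h(L_v)$, uses exactly the two properties of $\Ha$ that are available here --- it is a finitely generated $\oinki$-module (so $L_{v'}$ is a lattice, full because $1\in\Ha$ forces $L_v\subseteq L_{v'}$) and it is multiplicatively closed (so $\Ha$ stabilizes $L_{v'}$) --- and the hypothesis $h(L_v)\subseteq\pi^{-t}L_v$ pins $L_{v'}$ between $L_v$ and $\pi^{-t}L_v$, giving $\delta(v,v')\leq t$. As a byproduct your construction also shows $S_0(\Ha)\neq\emptyset$ for every order, which is implicitly needed. No gaps.
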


From the shapes of thick paths or infinite leaves, next result is straightforward:

\begin{proposition}\label{parity}
For any thick line $\mathfrak{S}$ with a set $T$ of stem vertices,
we denote by $c_T(w,w')$ the number of points in the stem $T$ lying in the path
joining $w$ and $w'$. Then, for every pair $v$ and $v'$ of endpoints
$$\delta(v,v')\equiv\left\{\begin{array}{ccl} c_T(v,v')-1&\textnormal{ if }&c_T(v,v')\neq0\\ 
 0&&\textnormal{ otherwise }\end{array}\right.\qquad(\textnormal{mod }2).$$
In particular, the distance between every pair $v$ and $v'$ of endpoints in either, a ball or an infinite leaf,  is even.
\end{proposition}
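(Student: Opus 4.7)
The plan is to reduce to a case analysis based on the explicit structure of thick lines recalled just before the statement. For any thick line $\mathfrak{S}$ of depth $d$ with stem $T$, every stem vertex has depth exactly $d$, and the ``going outwards through the leaves'' property forces every vertex of $\mathfrak{S}$ at tree-distance $k$ from $T$ to have depth $d-k$. Consequently the endpoints (degree-one vertices) of $\mathfrak{S}$ are precisely the stem borders when $d=0$, and are precisely the leaf vertices of depth $0$ lying at tree-distance exactly $d$ from their unique nearest stem vertex (the foot) when $d>0$. The case $d=0$ is trivial, since then $v$ and $v'$ are on the stem, so $c_T(v,v')=\delta(v,v')+1$ and the congruence holds; assume henceforth $d>0$.

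Let $w$ and $w'$ denote the feet of $v$ and $v'$. If $c_T(v,v')\geq 1$, the unique tree-path from $v$ to $v'$ enters the stem at $w$, travels along $T$ to $w'$, and exits to $v'$, whence
\[\delta(v,v')=d+\delta(w,w')+d=2d+(c_T(v,v')-1),\]
since the number of stem vertices traversed equals $\delta(w,w')+1$, and the required congruence is immediate. If instead $c_T(v,v')=0$, then $v$ and $v'$ must share a foot $w$, and their tree-path meets at a non-stem vertex $m$ strictly above $w$ inside the leaf subtree rooted at $w$. A second application of the going-outwards property shows that the two descending segments from $m$ to $v$ and to $v'$ have length equal to the depth of $m$, so $\delta(v,v')$ is twice that depth and therefore even.

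The ``in particular'' clause covers balls at once: a ball is a thick line whose stem is a single vertex, so $c_T(v,v')\in\{0,1\}$ and the formula already forces $\delta(v,v')$ to be even. The infinite leaf is not a thick line, but using its description recalled above as a nested union of balls whose centres lie along a fixed ray and which share a common endpoint $c_0$, one checks directly that every endpoint of the infinite leaf is either $c_0$ itself or a leaf vertex $v$ over some ray-vertex $c_i$ with $\delta(v,c_i)=i$, so that $\delta(v,c_0)$ is even in either case. Since $\mathfrak{T}$ is a tree, $\delta(v_1,v_2)\equiv\delta(v_1,c_0)+\delta(v_2,c_0)\pmod 2$ for any vertices $v_1,v_2$, so any two endpoints of the infinite leaf are at even distance. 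The main point to keep straight throughout is the interplay between depth inside $\mathfrak{S}$ and tree-distance in $\mathfrak{T}$, both mediated by the going-outwards lemma; once that alignment is in hand, each of the verifications above is routine.
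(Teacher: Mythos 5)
Your proof is correct and takes the route the paper intends: the paper offers no written argument beyond the remark that the statement is ``straightforward'' from the explicit shapes of thick lines and infinite leaves, and your case analysis (path crossing the stem through the two feet versus the two endpoints meeting inside a single leaf subtree, plus the separate even-distance check for the infinite leaf using parity of distances in a tree) is exactly the routine verification being alluded to. No gaps.
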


Let $\Da$ be a full order in the local quaternion algebra $\alge$, and let  $L$ be a subalgebra of
$\alge$. We note that $L\cap\Da$ can be characterized as the largest order in $L$ contained
in $\Da$. In particular, If $\Ea$ is an Eichler order, $L\cap\Ea$ is the largest order $\Ha$ in $L$
such that $\mathfrak{S}_0(\Ha)$ contains the finite path $\mathfrak{S}_0(\Ea)$. We conclude
that an order in $L$ is optimal in $\Ea$ if and only if it satisfies the following two conditions:
\begin{enumerate}
\item $\mathfrak{S}=\mathfrak{S}_0(\Ha)$ is minimal among the branches of orders in $L$ containing the path
$\mathfrak{S}_0(\Ea)$.
\item $\Ha$ is maximal among the suborders of $L$ having the branch $\mathfrak{S}$.
\end{enumerate}
We show that, when $L$ is a proper subalgebra of $\alge$, a full order in $L$ is completely determined by its
branch. In fact, if $L$ is a semisimple conmutative algebra, then every order in $L$ has the form $\oink_L^{[t]}$
for some non-negative integer $t$ \cite[Lemma 4.1]{eichler2}, and their branches are certainly different because of
Proposition \ref{p31}. A similar phenomenon occurs when $L$ is generated by a nilpotent element since
for any pair of full orders $\Ha_1$ and $\Ha_2$ in $L$ we have either $\Ha^{[t]}_1=\Ha_2$ or 
$\Ha^{[t]}_2=\Ha_1$ for some $t$. In fact, we can assume $L=k\left[\lbmatrix 0100\right]$ and any order in $L$
has the form $\Ha=\oinki\left[\lbmatrix 0{\pi^r}00\right]$ for some $r\in\mathbb{Z}$.
The statement follows, therefore, from the following result:

\begin{lemma}\label{ranktre}
Every order $\Ha$ in the three dimensional algebra $L=\lbmatrix kk0k$ is the intersection of the maximal orders in a thick ray. The algebra $L$ is uniquely determined by the end of the ray and conversely. 
\end{lemma}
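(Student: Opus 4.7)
The plan is to reduce everything to an explicit calculation in the standard upper-triangular model. I fix coordinates so that $L=\lbmatrix{k}{k}{0}{k}$, with reference order $\Ha_0=\lbmatrix{\oinki}{\oinki}{0}{\oinki}$, and note that $L$ has one-dimensional Jacobson radical $J=k\cdot e_{12}$ with $L/J\cong k\times k$.

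The first step is to classify the full-rank orders in $L$. For any such $\Ha$, the intersection $\Ha\cap J$ is a rank-one $\oinki$-lattice, say $\pi^a\oinki\cdot e_{12}$, while the image $\bar{\Ha}\subseteq L/J$ is an order containing the diagonal image of $\mathfrak{1}$, hence of the form $\oinki+\pi^t(\oinki\times\oinki)$ for some $t\geq 0$. Multiplicative closure in $L$ forces $a=t$ after rescaling $e_{12}$ by a suitable unit of the diagonal torus of $L^*$, and one concludes that $\Ha=\Ha_0^{[t]}$ for a unique $t\geq 0$; this recovers \cite[Lemma 4.1]{eichler2} in the present case.

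Next I would compute $\mathfrak{S}_0(\Ha_0)$ directly. The maximal orders of $\matrici_2(k)$ containing $\Ha_0$ correspond to $\oinki$-lattices $M\subseteq k^2$ preserved by $\Ha_0$, which, up to homothety, are the lattices $M_n=\oinki\lcvector{1}{0}\oplus\pi^n\oinki\lcvector{0}{1}$ for $n\geq 0$. Their vertices form a ray in the Bruhat--Tits tree, and a direct matrix-entry computation yields $\Ha_0=\bigcap_{n\geq 0}\mathrm{End}(M_n)$. By Proposition \ref{p31}, $\mathfrak{S}_0(\Ha_0^{[t]})$ is the $t$-thick ray obtained as the $t$-neighborhood of this ray. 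To see that $\Ha_0^{[t]}$ equals the intersection $\Da^*$ of all maximal orders in its branch, I would argue that $\Da^*$ is an order whose branch is again this thick ray; inspecting the list of orders in $\matrici_2(k)$ that are intersections of maximal orders (given in the introduction), the only family whose branch is a thick ray is $\Ha_0^{[t']}$, and the thickness is an invariant of the branch, forcing $\Da^*=\Ha_0^{[t]}$.

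Finally, for the correspondence with ends, I observe that the lattices $M_n$ approach the line $\ell=k\lcvector{1}{0}\subseteq k^2$ as $n\to\infty$, so the unique end of the ray $\mathfrak{S}_0(\Ha_0)$ is $\ell$, which is precisely the line stabilized by $L$. Hence $L$ and this end determine one another; conversely, any line in $k^2$ arises as an end of the tree, and the algebra stabilizing it is conjugate to $L$ in $\matrici_2(k)$. The main obstacle is the classification in the first step, where multiplicative closure must be used with care to rule out spurious orders and to pin down the alignment between the radical lattice and the quotient order; once this is in hand, the remaining assertions reduce to direct lattice bookkeeping and Proposition \ref{p31}.
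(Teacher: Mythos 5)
Your first step contains a genuine error. It is not true that every order in $L$ equals $\Ha_0^{[t]}$: the order $\lbmatrix {\oinki}{\pi\oinki}0{\oinki}$ is a counterexample, with $\Ha\cap J=\pi\oinki e_{12}$ (so $a=1$) while $\bar\Ha=\oinki\times\oinki$ (so $t=0$). Multiplicative closure imposes no relation at all between $a$ and $t$ --- the lattice $\oinki\mathfrak{1}+\pi^t(\oinki e_{11}+\oinki e_{22})+\pi^a\oinki e_{12}$ is a ring for every $a\in\mathbb{Z}$ and every $t\geq0$ --- and rescaling $e_{12}$ by a unit of the diagonal torus cannot repair this, since conjugation by $\mathrm{diag}(u_1,u_2)$ with $u_i\in\oinki^*$ multiplies $e_{12}$ by the unit $u_1u_2^{-1}$ and therefore preserves the valuation $a$. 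What is true, and what your argument needs, is that every order in $L$ is conjugate to some $\Ha_0^{[t]}$ by an element of the normalizer of $L$ (the Borel subgroup): one needs a non-unit diagonal conjugation by $\mathrm{diag}(\pi^m,1)$ to move $a$ onto $t$, and in general also a unipotent conjugation by $\mathfrak{1}+\lambda e_{12}$ to kill the off-diagonal component of the lift of the idempotent --- orders such as $\oinki\mathfrak{1}+\oinki(\pi^2e_{11}+\pi e_{12})+\pi^3\oinki e_{12}$ are not even of the split form your bookkeeping assumes. Since these conjugations induce automorphisms of the tree fixing the end attached to $L$ and carrying thick rays with that end to thick rays with that end, the corrected statement does suffice for the lemma; this transport is precisely what the paper's phrase ``by an appropriate choice of basis'' accomplishes, but you must make it explicit and replace the false equality $\Ha=\Ha_0^{[t]}$ by conjugacy under the normalizer of $L$.

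Apart from this, your route differs from the paper's and the differences are defensible. The paper normalizes a basis $\{c,\mathfrak{1},a\}$ of $\Ha$ directly and invokes \cite[Theorem 1.2]{eichler2}, and it proves the end correspondence via the criterion that $\Ha$ and $\Ha_0$ span the same algebra if and only if $\Ha^{[2t]}\subseteq\Ha_0^{[t]}\subseteq\Ha$ for some $t$; you instead compute $\mathfrak{S}_0(\Ha_0)$ from the invariant lattices $M_n$ and identify the end with the line $k\lcvector 10$ stabilized by $L$, which is arguably cleaner for the second assertion. Your argument that the intersection of the maximal orders in the branch must again be of type $\Ha_0^{[t']}$ (thick rays occurring only in family (4) of the classification of intersections of maximal orders) is legitimate, provided you also note that such an order is determined by its branch, which the paper establishes immediately before the lemma.
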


\begin{proof}
Let $\oinki c=\Ha\cap\lbmatrix 0k00$. Then $\Ha$ has a basis of the form $\{c,\mathfrak{1},a\}$, where 
$\mathfrak{1}$ denotes 
the identity matrix, and $a$ is not in the space spanned by $\mathfrak{1}$ and $c$, whence  it has different eigenvalues.
By an appropriate choice of basis we can assume $c=\lbmatrix 0{\pi^r}00$ and $a=\lbmatrix x00y$, where
$x\equiv y\mod \pi^r$ and the result follows from \cite[Theorem 1.2]{eichler2}. For the last statement, we
observe that two orders $\Ha$ and $\Ha_0$ span the same algebra if and only if for some $t\geq0$ we have
$\Ha^{[2t]}\subseteq\Ha_0^{[t]}\subseteq\Ha$, and the corresponding condition for trees
characterize thick rays with the same end.
\end{proof}

In what follows, if $S$ is an $r$-thick path, we denote by $u(v)$ the steam vertex that is closest to $v$.
The simplified graph associated to an $r$-thick path is the graph obtained by identifying two
leaf vertices $v$ and $v'$ at the same distance from the stem, whenever $u(v)=u(v')$, 
and identifying two edges whenever its corresponding endpoints 
are identified or coincide (see Figure 6 in \S6). The simplified
graph of a leaf can be defined similarly by identifying all vertices at the same depth. The latter graph is a ray.
  Similarly, for every subtree
$W$ containing the stem we can define $u_W(v)$ as the vertex of  $W$ that is closest to $v$. The tree $W$ is
usually obtained adding,  to the stem, one or more paths going outwards through the leaves. Then the simplified
graph with these paths expanded can be defined by replacing $u$ by $u_W$ and the stem by $W$ in the 
preceding definition (see Figure 2).

\begin{figure}
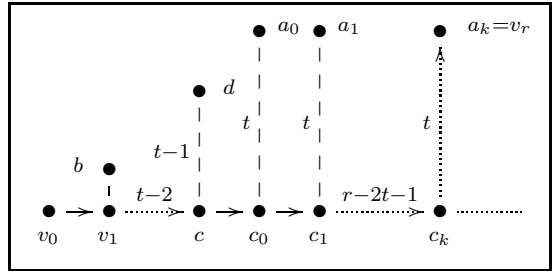

\[ \fbox{ \xygraph{
!{<0cm,0cm>;<.8cm,0cm>:<0cm,.8cm>::} 
!{(1,1) }*+{\bullet}="v0" !{(1,0.5) }*+{{}^{v_0}}="v0n"
!{(2,1) }*+{\bullet}="v1" !{(2,0.5) }*+{{}^{v_1}}="v0n"
!{(3.5,1) }*+{\bullet}="c" !{(3.5,0.5) }*+{{}^{c}}="cn"
!{(4.5,1) }*+{\bullet}="c0" !{(4.5,0.5) }*+{{}^{c_0}}="c0n"
!{(5.5,1) }*+{\bullet}="c1" !{(5.5,0.5) }*+{{}^{c_1}}="c1n"
!{(7.5,1) }*+{\bullet}="ck" !{(7.5,0.5) }*+{{}^{c_k}}="ckn"
!{(2,1.7) }*+{\bullet}="b" !{(1.5,1.7) }*+{{}^{b}}="bn"
!{(3.5,3) }*+{\bullet}="d" !{(4,3) }*+{{}^{d}}="bn"
!{(4.5,4) }*+{\bullet}="a0" !{(5,4) }*+{{}^{a_0}}="a0n"
!{(5.5,4) }*+{\bullet}="a1" !{(6,4) }*+{{}^{a_1}}="a1n"
!{(7.5,4) }*+{\bullet}="ak" !{(8.5,4) }*+{{}^{a_k=v_r}}="akn"
!{(9,1) }*+{}="e"
"v0"-@{->}"v1" "v1"-"b" "v1"-@{.>}^{t-2}"c" "c"-@{--}^{t-1}"d" "c"-@{->}"c0" "c0"-@{--}^t"a0"
"c0"-@{->}"c1" "c1"-@{--}^t"a1" "c1"-@{.>}^{r-2t-1}"ck" "ck"-@{.>}^t"ak" "ck"-@{.}"e" 
 } }
\]
\caption{The simplified graph of a thick ray with one expanded path, namely the one joining $c_0$ and $v_0$.} 
\end{figure}

\begin{lemma}\label{newlemma}
Let now $R$ be a $t$-thick ray with an end $e$, and let $P\subseteq R$ be a path of length r whose 
vertices in order are $v_0,v_1,\dots,v_r$. Then $R$ is the smallest thick ray with end $e$ containing $P$ if and only if
the following conditions hold:
\begin{enumerate} \item $v_0$ and $v_r$ are endpoints of $R$,
\item either $u(v_0)$ or $u(v_r)$ is the stem border of $R$, 
\item $r\geq 2t$.
\end{enumerate}
\end{lemma}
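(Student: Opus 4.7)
The plan is to produce an explicit candidate $R^*$ for the smallest thick ray with end $e$ containing $P$, verify it satisfies (1)--(3), and show conversely that any $R$ satisfying (1)--(3) must equal $R^*$. The candidate comes from the ``peak'' of $P$ relative to $e$: in the tree, the rays from the vertices $v_i$ toward $e$ share a common tail, and the unique vertex where they all merge is the vertex $v^*=v_s\in P$ closest to $e$; write this tail as $v_s, w_1, w_2, \dots$. Assuming $s\le r-s$ (the opposite case being symmetric under $v_0\leftrightarrow v_r$), I would let $R^*$ be the $s$-thick ray whose stem border is $v_{r-s}$ and whose stem traverses $v_{r-s}, v_{r-s-1},\dots, v_s, w_1, w_2, \dots$.

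First I would verify $P\subseteq R^*$: the vertices $v_s,\dots,v_{r-s}$ lie on the stem, while $v_i$ with $i<s$ is at distance $s-i\le s$ from $v_s$ and $v_i$ with $i>r-s$ at distance $i-(r-s)\le s$ from $v_{r-s}$. Then I would check (1)--(3) for $R^*$: $v_0$ and $v_r$ are both at distance exactly $s$ from the stem and so are endpoints of $R^*$; the closest stem vertex $u(v_r)=v_{r-s}$ coincides with the stem border; and $r\ge 2s=2\cdot\mathrm{thickness}(R^*)$. Next, to prove minimality of $R^*$, I would take any thick ray $R'$ with end $e$, thickness $t'$, stem border $b'$, containing $P$, and show $R^*\subseteq R'$: writing $M$ for the first vertex where the stem of $R'$ meets the common tail $\{v_s, w_1, \dots\}$, the bounds $d(v_0,\mathrm{stem}(R'))\le t'$ and $d(v_r,\mathrm{stem}(R'))\le t'$ combined with triangle-inequality estimates along the stems yield $d(v,\mathrm{stem}(R'))\le t'$ for every $v\in R^*$; the case of $b'$ off $P\cup\{v_s, w_1, \dots\}$ reduces to $b'\in P\cup\{v_s, w_1, \dots\}$ since replacing $b'$ by $M$ only shrinks $R'$.

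For the converse, suppose $R$ satisfies (1)--(3). By (2) and the symmetry $v_0\leftrightarrow v_r$, I may assume the stem border $b$ of $R$ equals $u(v_r)$; then (1) gives $d(v_r,b)=t$, so $b$ lies at distance $t$ from $v_r$ on the ray $v_r, v_{r-1},\dots, v_s, w_1,\dots$. Condition (3) forces $t\le r/2\le r-s$, whence $b=v_{r-t}$ lies on $P$ between $v_s$ and $v_r$; the stem of $R$ then enters the common tail at $v_s$, so $u(v_0)=v_s$ and applying (1) to $v_0$ gives $s=d(v_0,v_s)=t$, forcing $b=v_{r-s}$ and $R=R^*$. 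The main obstacle is the minimality step: while the reduction to $b'\in P\cup\{v_s, w_1,\dots\}$ streamlines the case analysis, careful bookkeeping is needed to maintain the triangle-inequality bounds along both portions of $R^*$'s stem, namely the segment $v_{r-s}, v_{r-s-1},\dots, v_s$ lying in $P$ and the continuation $v_s, w_1, w_2,\dots$ toward $e$.
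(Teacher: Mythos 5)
Your strategy is sound and genuinely different from the paper's. The paper proves necessity by exhibiting, for each failed condition, an explicitly smaller thick ray containing $P$ (the $t$-thick ray whose stem starts at $c_1$ when no extreme of $P$ sits over the stem border, and a $(t-1)$-thick ray whose stem is extended one step toward $v_0$ when $r<2t$ or $v_r$ is not an endpoint), then disposes of sufficiency in two lines; it never names the minimal thick ray. You instead construct the minimum $R^*$ from the projection $v_s$ of the end $e$ onto $P$ and reduce the lemma to the equivalence of (1)--(3) with $R=R^*$. This is more work, but it makes explicit something the statement of the lemma presupposes and the paper leaves implicit: that a smallest thick ray with end $e$ containing $P$ exists and is unique (not automatic, since this family of subtrees is not closed under intersection).

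Two steps of the plan need repair. First, in the minimality argument, ``replacing $b'$ by $M$ only shrinks $R'$'' is true but can destroy the hypothesis $P\subseteq R'$: the stem of $R'$ may enter the hull of $P\cup\{e\}$ through one of the arms of $P$, say down to $v_a$ with $a<s$, before reaching the common tail, and truncating it at $M=v_s$ increases $d(v_0,\mathrm{stem})$ from $a$ to $s$, possibly past $t'$. The case analysis should instead be organized by the first vertex at which the stem of $R'$ meets the hull (a vertex $v_a$ on either arm, or some $w_j$); your triangle-inequality estimates then close each case. Second, in the converse, the chain $t\le r/2\le r-s$ silently reuses the normalization $s\le r-s$ adopted for the construction, which is not compatible with the independent normalization $b=u(v_r)$: after swapping $v_0\leftrightarrow v_r$ to arrange $b=u(v_r)$, one may well have $s>r/2$. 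The case $t>r-s$, in which $b$ sits on the tail at $w_{t-(r-s)}$, must instead be excluded by applying condition (1) to the other extreme: there $d(v_0,\mathrm{stem})=2s+t-r$, so (1) forces $s=r/2$, whence $t>r-s=r/2$ contradicts (3). With these two points fixed the argument is complete.
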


\begin{proof}
Let $c_0c_1,\dots$ be the walk corresponding to the stem $T$ of $R$ as in Figure 2. If $P$ does not have an endpoint, say $v_0$, which is an endpoint of $R$ and $u(v_0)=c_0$, then $P$ is contained in an smaller thick ray, namely the $t$-thick path whose stem is the ray in $e$ corresponding to the walk $c_1c_2c_3\dots$, which we usually call the ray joining $c_1$ and $e$.  Assume next that $v_0$ is as above, while either $r<2t$ or $\delta(v_r,T)<t$. 
In either case $P$ is contained in the $(t-1)$-thick ray whose stem joins $e$ and the neighbor $c$ of $c_0$ lying between
$c_0$ and $v_0$ (see Figure 2).  Conversely, if (1)-(3) are satisfied, any ray in $e$ containing $P$ must have thickness $t'\geq t$, and  $c_{t'-t}$ must be a point in the stem. The result follows.
\end{proof}

\subparagraph{Proof of Theorem \ref{t1}}

The  existence of an optimal embedding of an order $\Ha$ of rank $2$ into some order in 
$\mathbb{O}_I$ can be proved locally. A local embedding $\phi:\Ha\rightarrow\Ea$ is optimal if and only if
$S_0(\Ea)$ contains an endpoint of $\mathfrak{S}_0\Big(\phi(\Ha)\Big)$, 
and it is easy to see that any branch containing paths
of length $\alpha_\wp$, contains one such path starting from an endpoint. Any such path
has the form $\mathfrak{S}_0(\Ea')$ for a local Eichler order of level $\alpha_\wp$.  Recall that
 $$H_\wp(\Ea)=\left\{\begin{array}{cl}
N(\alge_\wp^*)&\textnormal{ at infinite places }\wp,\\
\oink_\wp^*K_\wp^{*2}&\textnormal{ if }\alpha_\wp\textnormal{ is even,}\\ K_\wp^*&\textnormal{otherwise.} 
\end{array}\right.$$

Asume first $L\cong K(\sqrt d)$ is a field.
It follows from the contention $F_{\mathrm{op}}\subseteq F_{\mathrm{max}}\subseteq  L$
(\S2), that $\Ha$ embeds optimally into
 every order in the genus, unless $F_{\mathrm{op}}= L$. 
If this condition is satisfied, then necessarily $L\subseteq\Sigma(\Da)$, which impplies (b) and (c) by the preceding 
formulas, and $F_{\mathrm{max}}=L$, which implies (a). On the other hand, if all three conditions
are satisfied, the maximal order $\oink_L$ is selective for the genus $\mathbb{O}_0$ of maximal orders
 \cite{FriedmannQ}. In particular, $L$ embeds into $\alge$ and $F(\Da|\oink_L)=L$ for 
any maximal order $\Da$ containing $\oink_L$.
There are no ramified places for $L/K$, and for any finite place $\wp$, inert for $L/K$, 
 the branch of $\Ha=\oink_L^{[t]}$ is a ball, whence the distance from every pair of its vertices at depth $0$ is even
by Proposition \ref{parity}. We conclude, from  the preceding characterization of optimal embeddings, 
 that $H_{\mathrm{op},\wp}(\Ea|\Ha)=\oink_\wp^*K_\wp^*$, when $\alpha_\wp$ is even and $\wp$ inert. Since the
equality $H_{\mathrm{op},\wp}(\Ea|\Ha)=N(L_\wp^*)$  is trivial at infinite places by (c), the result follows. The case  $L\cong K\times K$ is similar, since in this case $F_{\mathrm{max}}(\alge|\Ha)=K$ by Proposition \ref{p21}. 

Assume next $L\cong K[x]/(x^2)$.
It suffices to prove that in this case $F_{\mathrm{op}}(\Da|\Ha)$ is the spinor class field $\Sigma(\Da)$. This follows
if $H_{\mathrm{op},\wp}(\Da|\Ha)=H_\wp(\Da)$ holds at all places $\wp$. This is immediate at
infinite places, so we assume $\wp$ is finite. One contention is immediate, while the other follows from Proposition \ref{parity}. When $L$ is three-dimensional, the result follows, since for any optimal embedding of a local order $\Ha_\wp=\oink_\wp[a,c]$ of rank $3$, where $a$ is semisimple and $c$ nilpotent, the induced embedding 
of $\oink_\wp[c]$ is also optimal by Lemma \ref{newlemma}, since  the vertex 
 $v_0$ in Figure 2 must be an endpoint of the infinite leaf $\mathfrak{S}_0(\oink_\wp[c])$.
\qed

\section{Two realizations of the Bruhat-Tits tree}

 In any metric space we denote by $B_z[r]$ the closed ball of centre $z$ and 
radius $r$. In a local field $k$ with absolute value $\rho$ and uniformizing parameter $\pi$,
we also write $B_z^{[r]}$ instead of $B_z[\rho(\pi)^r]$.
 Since in $k$ any element of a ball is its center, for every pair of balls $B$ and $D$, either 
$B\cap D=\emptyset$ or one ball is contained in the other. In the latter case,
there is an element $z$ of $k$ and two integers $r,s$ such that $B=B_{z}^{[r]}$ and 
 $D=B_{z}^{[s]}$, hence we can define the distance between them by $d(D,B)=|r-s|$.
Furthermore, if $B$ and $D$ are disjoint, and if $C$ is the smallest ball containing both, we define
$d(B,D)=d(B,C)+d(C,D)$. With this distance the set of balls in $k$ is a metric space.
 We define a graph $\mathfrak{G}$ whose vertices are the balls and 
there is an edge between two balls if and only if the distance between them is $1$.
It is easy to see that this graph is a tree. Figure 1 shows part of the graph for $k=\mathbb{Q}_2$.  

\begin{figure}
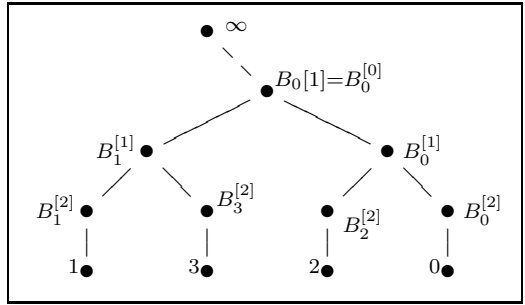

\[ \fbox{ \xygraph{
!{<0cm,0cm>;<.8cm,0cm>:<0cm,.8cm>::} 
!{(11,19) }*+{\bullet}="i" !{(11.5,19) }*+{{}^\infty}="i1"
!{(12,18) }*+{\bullet}="b0" !{(12.7,18.2) }*+{\ \ \ \ {}^{B_0[1]=B_0^{[0]}}}="b0n"
!{(10,17) }*+{\bullet}="b11" !{(9.5,17) }*+{{}^{B_1^{[1]}}}="b11n"
!{(14,17) }*+{\bullet}="b10" !{(14.6,17) }*+{{}^{B_0^{[1]}}}="b10n"
!{(9,16) }*+{\bullet}="b21" !{(8.5,16) }*+{{}^{B_1^{[2]}}}="b21n"
!{(11,16) }*+{\bullet}="b23" !{(11.5,16.2) }*+{{}^{B_3^{[2]}}}="b23n"
!{(13,16) }*+{\bullet}="b22" !{(13.6,15.8) }*+{{}_{B_2^{[2]}}}="b22n"
!{(15,16) }*+{\bullet}="b20" !{(15.6,16) }*+{{}^{B_0^{[2]}}}="b20n"
!{(9,15) }*+{\bullet}="d1" !{(8.8,15) }*+{{}^1}="d1n"
!{(11,15) }*+{\bullet}="d3" !{(10.8,15) }*+{{}^3}="d3n"
!{(13,15) }*+{\bullet}="d2" !{(12.8,15) }*+{{}^2}="d2n"
!{(15,15) }*+{\bullet}="d0" !{(14.8,15) }*+{{}^0}="d0n"
"i"-@{--}"b0" "b0"-"b11" "b0"-"b10" "b10"-"b22" "b10"-"b20" "b11"-"b21" "b11"-"b23"
"b20"-@{--}"d0" "b21"-@{--}"d1" "b22"-@{--}"d2" "b23"-@{--}"d3"
 } }
\]
\caption{$\mathfrak{T}(\mathbb{Q}_2)$ seen as the graph of balls $\mathfrak{G}$.
Dashed lines represent the class of rays that correspond to the element of $\mathbb{P}_1(\mathbb{Q}_2)$ that 
appears in its 
end.} \label{figarb}
\end{figure}

We give an explicit isomorphism between $\mathfrak{G}$ and the BT-tree  $\mathfrak{T}$ defined in \S3. 
Remember that if $\Lambda$ is a lattice of rank two in 
$k^2$, then $\mathfrak{D}_{\Lambda}=\{T \in \matrici_2(k) \mid T \Lambda \subseteq \Lambda \}$, is a maximal order in $\matrici_2(k)$. 
Furthermore every maximal order in $\matrici_2(k)$ is $\mathfrak{D}_{\Lambda}$ for some lattice 
$\Lambda\subseteq k^*$ of rank two.  
For any element $z \in k$ and any $n \in \mathbb{Z}$  we denote by $ \Lambda_{z,n}$ the lattice generated by $(1,z)$ 
and 
$(0,\pi^n)$.   It is apparent  that if $x \equiv y $ mod $(\pi^n),$
then $\mathfrak{D}_{\Lambda_{x,n}}= \mathfrak{D}_{\Lambda_{y,n}}$. 
Hence, the map $B_z^{[n]}  \mapsto \mathfrak{D}_{\Lambda_{z,n}}$, from the set of balls to the set
of maximal orders, is well defined. 
This map is bijective since the lattices $\Lambda_{z,n}$ are exactly the lattices whose projection to the first 
coordinate is the ring $\oinki$, whence every lattice is a multiple of exactly one lattice of the form
$\Lambda_{z,n}$, for $(z,n) \in k \times \mathbb{Z}$. It is straightforward that this correspondence preserves 
neighbors, and so does its inverse since the valencies of the vertices in either tree is the same.

By a descending  ray, we mean the ray defined by a walk $B_0B_1B_2\dots$, as before, 
satisfying $B_i\supset B_{i+1}$. An ascending ray is defined analogously. 
Since $k$ is a complete metric space, for every 
descending ray as above there is a unique element $z \in \bigcap_{i=1}^{\infty} B_i \subseteq k$.
Moreover $\{z\}=\bigcap_{i=1}^{\infty} B_z^{[i]}$, whence the function 
$\psi\left[(B_i)_{i=1}^{\infty}\right]=z$ that associates, to each descending ray, its intersection, is surjective. 
 It is not hard to show that every ray in the graph is ascending or equivalent to a
descending ray, whence we can identify the set of ends in the tree with 
 the projective line $\mathbb{P}_1(k)$,
by sending the class of ascending rays to $\infty$ (Figure \ref{figarb}).

Let us denote by $\mathbb{M}$ the group of M\"{o}bius tranformations on $k$. Remember that  
$\mathbb{M} \cong \mathrm{PGL}_2(k)=\mathrm{GL}_2(k)/k^*$ and  it acts on the projective line $\mathbb{P}_1(k)$. 
We would like to define an action of $\mathbb{M}$ on $\mathfrak{G}$ but the image of a ball in $k$ is not always 
a ball in  $k$, so we use balls in $\mathbb{P}_1(k)$ instead.  A ball in $\mathbb{P}_1(k)$ is either 
a ball in $k$ or the complement in $\mathbb{P}_1(k)$ of a ball in $k$. 
 It is known that $\mathbb{M}$ acts on the set of balls in $\mathbb{P}_1(k)$.

Let $B$ be a ball in $k$. The partition of $\mathbb{P}_1(k)$ defined by  $B$ is the collection 
$\mathfrak{P}(B)=\{B^c,B_1, \dots , B_p\}$ where $B_1, \dots , B_p$ are all the balls in $k$ contained in and 
adjacent to $B$. 
It is clear that $\mathfrak{P}(B)$ is a partition of $\mathbb{P}_1(k)$, and $B^c$ is the set that contains the point $
\infty$. It is not hard to see that every partition $\mathfrak{P}$ of $\mathbb{P}_1(k)$ into $p+1$ balls has this 
form, i.e.  $\mathfrak{P}=\mathfrak{P}(E)$ for some ball $E$ in $k$.
Since $\sigma \in \mathbb{M}$ is a bijection in $\mathbb{P}_1(k)$, the set 
$\sigma(\mathfrak{P}(B))=\{\sigma(D) \mid D \in \mathfrak{P}(B)\}$ is 
also a partition.  It  follows that we can define an action of $\mathbb{M}$ 
in the set of balls in $k$  by setting $\sigma * B =E$ whenever $\sigma (\mathfrak{P}(B)) = \mathfrak{P}(E)$.  

For an invertible matrix $u=\lbmatrix abcd$ we call $\tilde{u}$ the M\"{o}bius 
transformation 
given by $\tilde{u}(z)=\frac{az+b}{cz+d}$. An easy but extended computation in the generators of $\mathbb{M}$, 
proves that the 
action of $\mathbb M$ on the vertices of the Bruhat-Tits tree defined by partitions coincides with the action 
by conjugation on the vertices seen as maximal orders, i.e. for every ball 
$B=B_{z}^{[r]}$,
we have  $\tilde{a} *B= B_{z'}^{[r']}$ if and only if $
 \mathfrak{D}_{\Lambda_{z',r'}} =a \mathfrak{D}_{\Lambda_{z,r}} a^{-1}$. 
Finally, since the latter action preserve the edges of the tree, the 
former one does. Summarizing, we can see the natural action of the group of M\"{o}bius transformations on the 
Bruhat-Tits tree as
an extension of its action on the projective line $\mathbb{P}_1(k)$. We use this 
action to study branches of the tree in all that follows.  

\begin{example}
Let $\Ha=\oinki\left[\lbmatrix 0100\right]$ be the order generated by a nilpotent element. Since the normalizer of a maximal
order $\Da$ is $k^*\Da^*$, the branch $S_0(\Ha)$ is the set of orders that are invariant under conjugation by the unit
$h=\lbmatrix 1101$, which generates $\Ha$. 
This element corresponds to the Moebius transformation $z\mapsto z+1$. Note that for a  
Moebius transformation $\tau$ fixing $\infty$, the ball $\tau*B$ is simply the image $\tau(B)$. We conclude that
$S_0(\Ha)$ is the set of maximal orders corresponding to balls $B$ satisfying $B=B+1$, i.e., balls of radius $1$ or
larger. This collection forms an infinite leaf as in Figure 1E, whose endpoints are the balls of radius $1$.
We recover thus \cite[Prop. 4.4]{eichler2}.
\end{example}

\section{Cross ratio as an invariant in the Bruhat-Tits tree}

The bijection described in \S4
allow us to see the elements of $\mathbb{P}_1(k)$ as ends of the tree.
 As usual, the cross ratio is defined by 
\begin{equation}
 [a,b;c,d]= \frac{a-c}{b-c}  \cdot \frac{b-d}{a-d}\in k,  \label{frazoncruzada}
\end{equation}
for any quartet $(a,b,c,d)\in\mathbb{P}_1(k)^4$ without repeated coordinates, and with the usual 
conventions regarding the value $\infty$. 
The cross ratio is invariant under the action of $\mathbb{M}$ described in \S4. Concretely, for every 
$a,b,c,d$ in $\mathbb{P}_1(k)$, and every $\sigma \in \mathbb{M}$, we have 
$[a,b;c,d]=[\sigma(a),\sigma(b) ; \sigma(c),\sigma(d)]$. 

For an $n$-tuple $S=(B_1, \dots , B_n)$ of vertices, we define the hull of $S$, as the pair  
$(\mathfrak{h},S)$ where $\mathfrak{h}$ is the minimal connected subgraph of the Bruhatt-Titts tree containing 
$\{B_1, \dots , B_n\}$. Consider a pair of $n$-tuples $S=(B_1, \dots , B_n)$, $S'=(B_1', \dots , B_n')$,
while $(\mathfrak{h},S)$ and $(\mathfrak{h}',S')$ denote the corresponding hulls. We say that $(\mathfrak{h},S)$ and $(\mathfrak{h}',S')$ are isomorphic if 
$B_i \mapsto B_i'$ extends to an ismorphism of graphs  $\phi:\mathfrak{h}\rightarrow\mathfrak{h}'$.
We say that $S$ and $S'$ are conjugated if there is an element $\sigma \in \mathbb{M}$ such that
$\sigma * B_i=B'_i$ for every $i \in \{1,  \dots ,n\}$. 
We write $\sigma *S=S'$ in what follows, while notations like $\sigma(a,b,c)$
 for $a,b,c\in\mathbb{P}^1(k)$ must be interpreted similarly.
Note that conjugated $n$-tuples have isomorphic hulls.

\begin{figure}
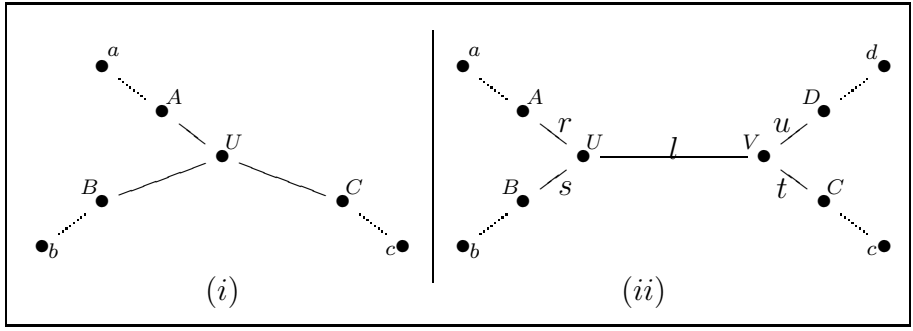

\[ \fbox{ \xygraph{
!{<0cm,0cm>;<.8cm,0cm>:<0cm,.6cm>::} 
!{(7,17) }*+{\bullet}="a"   !{(7.2,17.2) }*+{{}^a}="an"
!{(8,16) }*+{\bullet}="A"   !{(8.2,16.2) }*+{{}^{A}}="An"
!{(9,15) }*+{\bullet}="U"   !{(9.2,15.2) }*+{{}^{U}}="Un"
!{(7,14) }*+{\bullet}="B"   !{(6.8,14.2) }*+{{}^{B}}="Cn"
!{(11,14) }*+{\bullet}="C"  !{(11.2,14.2) }*+{{}^{C}}="Dn"
!{(6,13) }*+{\bullet}="b"   !{(6.2,12.8) }*+{{}^b}="bn"
!{(12,13) }*+{\bullet}="c"  !{(11.8,12.8) }*+{{}^c}="cn"
!{(12.5,18) }*+{}="down" 
!{(9,12) }*+{(i)}="i" 
!{(16,12) }*+{(ii)}="ii" 
!{(12.5,12) }*+{}="up" 
!{(16.5,15.2)}*+{l}="l1"
!{(14.7,15.7)}*+{r}="r1"
!{(14.7,14.3)}*+{s}="s1"
!{(18.3,14.3)}*+{t}="t1"
!{(18.3,15.7)}*+{u}="u1"
!{(13,17) }*+{\bullet}="al"   !{(13.2,17.2) }*+{{}^{a}}="aln"
!{(14,16) }*+{\bullet}="Al"   !{(14.2,16.2) }*+{{}^{A}}="Aln"
!{(13,13) }*+{\bullet}="bl"   !{(13.2,12.8) }*+{{}^{b}}="bln"
!{(14,14) }*+{\bullet}="Bl"   !{(13.8,14.2) }*+{{}^{B}}="Bln"
!{(15,15) }*+{\bullet}="Ul"   !{(15.2,15.2) }*+{{}^{U}}="Uln"
!{(18,15) }*+{\bullet}="Vl"  !{(17.8,15.2) }*+{{}^{V}}="Vln"
!{(20,17) }*+{\bullet}="dl"  !{(19.8,17.2) }*+{{}^{d}}="dln"
!{(19,16) }*+{\bullet}="Dl"  !{(18.8,16.2) }*+{{}^{D}}="Dln"
!{(20,13) }*+{\bullet}="cl"  !{(19.8,12.8) }*+{{}^{c}}="cln"
!{(19,14) }*+{\bullet}="Cl"  !{(19.2,14.2) }*+{{}^{C}}="Cln"
"a"-@{.}"A" "A"-"U" "U"-"B" "U"-"C" "B"-@{.}"b" "C"-@{.}"c"
"Al"-"Ul" "Ul"-"Bl" "Ul"-"Vl" "Vl"-"Dl" "Vl"-"Cl"
"al"-@{.}"Al" "bl"-@{.}"Bl" "cl"-@{.}"Cl" "dl"-@{.}"Dl"
"down"- "up"
} }
\]
\caption{ The hulls in Proposition 5.1 and 5.2. Some continuous lines might be reduced to points, e.g., 
$A=U$ in (i), $U=V$ or $D=V$ in (ii) are possible variations.} 
\label{figmob}
\end{figure}

\begin{proposition}\label{triplets}
Let $k$ a local field. Let $S=(A,B,C)$ and $S'=(A',B',C')$ two triplets of balls with isomorphic hulls. 
Then $S$ and $S'$ are conjugated. 
\label{thmob}
\end{proposition}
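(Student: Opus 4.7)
The plan is to reduce to the (sharp) $3$-transitivity of $\mathbb{M}\cong\mathrm{PGL}_2(k)$ on $\mathbb{P}_1(k)$, using that the $*$-action of \S 4 extends this Möbius action on ends to a tree isometry. In the Bruhat-Tits tree, the hull of any three vertices is a tripod: there is a unique branch vertex $U$ (which may degenerate to one of $A,B,C$ in the collinear case, or to all of them when $A=B=C$), and $\mathfrak{h}$ is the union of three, possibly trivial, geodesic arms joining $U$ to $A$, $B$, $C$. The given isomorphism of hulls then forces $\phi(U)=U'$ together with the arm-length equalities $\delta(U,A)=\delta(U',A')$, and similarly for $B$ and $C$.

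Next I would fabricate three ends. At each of $A,B,C$ pick a neighbour in $\mathfrak{T}$ lying outside $\mathfrak{h}$ and extend it iteratively to a ray, yielding $a,b,c\in\mathbb{P}_1(k)$. Such outward neighbours exist because the valency $p+1$ of every vertex is at least $3$ and $\mathfrak{h}$ contributes at most two neighbours at each of $A,B,C$; in the fully degenerate case $A=B=C$ one still has $p+1\geq 3$ directions available and can choose the three pairwise distinct. The rays representing $a,b,c$ then live in disjoint connected components of $\mathfrak{T}\setminus\mathfrak{h}$ attached to different vertices among $A,B,C$, so $a,b,c$ are pairwise distinct. Constructing $a',b',c'$ from $S'$ identically, sharp $3$-transitivity of $\mathbb{M}$ produces the unique $\sigma\in\mathbb{M}$ with $\sigma(a,b,c)=(a',b',c')$.

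It remains to check $\sigma*S=S'$. The branch vertex $U$ admits the intrinsic characterisation as the unique vertex of $\mathfrak{T}$ lying on each of the three bi-infinite geodesics joining pairs of $\{a,b,c\}$, and the parallel statement holds for $U'$. Because the $*$-action is by tree automorphisms and $\sigma$ maps $a,b,c$ to $a',b',c'$, it carries those three geodesics onto the corresponding ones for $a',b',c'$, forcing $\sigma*U=U'$. Isometry then pins down $\sigma*A$ as the unique vertex at distance $\delta(U,A)=\delta(U',A')$ from $U'$ along the geodesic towards $a'$, namely $A'$, and the same argument handles $B$ and $C$. The one point I expect to require a little care is the degenerate configurations in which $U$ coincides with some of $A,B,C$ or in which two of the $B_i$ agree; both fall into line once $U$ is identified as the common intersection of the three bi-infinite geodesics, which is unambiguous regardless of the degeneracies.
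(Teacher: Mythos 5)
Your proof is correct and follows essentially the same route as the paper: choose ends beyond $A,B,C$, invoke sharp $3$-transitivity of $\mathbb{M}$ on $\mathbb{P}_1(k)$, identify the central vertex $U$ intrinsically from the three ends (you use the common point of the three bi-infinite geodesics, the paper uses the unique ball whose partition separates the three ends — equivalent characterisations), and finish by distance preservation. Your treatment of the degenerate configurations is somewhat more explicit than the paper's, but the argument is the same.
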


\begin{proof}
Let $a,b,c\in\mathbb{P}_1(k)$ be ends beyond $A,B,C$ and let $U$ be the central vertex of the hull,
as in Figure $\textnormal{\ref{figmob}}(i)$. If, for instance, $A$ lies between $B$ and $C$, we can choose $U=A$
and choose $a$ in another direction, so the hull still looks like Figure $\textnormal{\ref{figmob}}(i)$
 with a line reduced to a point. Let
$a',b',c'$ and $U'$ be the corresponding elements for the hull of $S'$.     
Let $\sigma$ be the unique M\"obius transformation such that $\sigma(a,b,c)=(a',b',c')$.
In figure $\textnormal{\ref{figmob}}(i)$ the ball $U$ is uniquely determined by $a,b,c$. Concretely it is the 
only ball whose partition separates every two elements in $\{a,b,c\}$. Since $\sigma * U$ is the 
only ball whose partition separates every two elements in $\{a',b',c'\}$, we have $\sigma* U=U'$.
 The result follow from the fact that $\sigma$ preserves distances in the tree.  
\end{proof}

\begin{lemma}
Let $a,b,c,d,d'\in\mathbb{P}_1(k)$, disposed as in Figure \ref{figmow}. If $u=min\{r,s,t,u\}\geq0$, then 
$[a,b;c,d] \equiv d'\,(\mathrm{mod}\ \pi^{l+u})$.   \label{lemrc}
\end{lemma}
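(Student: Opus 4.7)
The plan is to combine the M\"{o}bius invariance of the cross ratio with the dictionary between tree distances and $\pi$-adic valuations developed in Sections~4--5.

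First I would apply Proposition~\ref{triplets} to move the configuration to a canonical position: choose $\sigma \in \mathbb{M}$ with $\sigma(a) = \infty$, $\sigma(b) = 0$, and $\sigma$ carrying $U$ to $\oinki$. Since the cross ratio is invariant under $\mathbb{M}$, this reduction loses nothing; after it the formula collapses to $[a,b;c,d] = d/c$. In these coordinates $V = B_z^{[l]}$ for some $z$ with $\nu(z)=0$, and the positions of $C$, $D$ translate into explicit valuation constraints: $c = z + \pi^l \gamma$, $d = z + \pi^l \delta$ with $\gamma, \delta \in \oinki^{*}$, while the finer locations of $c, d$ inside $C, D$ are controlled modulo $\pi^t$ and $\pi^u$ respectively. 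The point $d'$ in Figure~\ref{figmow} similarly admits a valuation description coming from its ball-theoretic position.

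Second, I would compute $d/c$ by expanding the inverse as a geometric series, $(z + \pi^l \gamma)^{-1} = z^{-1}\sum_{n \geq 0}(-\pi^l \gamma/z)^n$, which converges because $\nu(z)=0$. Multiplying out by $d$ and truncating modulo $\pi^{l+u}$, the answer depends only on $z$ and on the classes of $\gamma, \delta$ modulo $\pi^u$, and this truncation should be exactly the value of $d'$ read off from the figure.

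The hard part will be the bookkeeping around the hypothesis $u = \min\{r,s,t,u\} \geq 0$. The bound $u \leq t$ controls the approximation of $c$ by the center of $C$; the bounds $u \leq r$ and $u \leq s$ are what justify that the normalized images $\sigma(a) = \infty$ and $\sigma(b) = 0$ introduce no error at the relevant precision. Verifying that every residual error term --- from the truncated geometric series and from replacing $c, d$ by the centers of $C, D$ --- has valuation at least $l+u$ is the step requiring the most care.
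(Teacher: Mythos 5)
Your computational kernel (geometric-series expansion, error terms of valuation at least $l+u$ controlled by $u\leq r,s,t$) is sound, but the opening move is not. The congruence $[a,b;c,d]\equiv d'\ (\mathrm{mod}\ \pi^{l+u})$ is \emph{not} a M\"obius-invariant statement: the left side is invariant, but the right side is the coordinate of a specific point of $\mathbb{P}_1(k)$, and a normalizing $\sigma$ replaces $d'$ by $\sigma(d')$. So ``this reduction loses nothing'' is false as stated; after conjugating you would at best have proved $[a,b;c,d]\equiv\sigma(d')$, and bounding $|\sigma(d')-d'|$ is precisely the content of the lemma, not bookkeeping. Note also that Proposition \ref{triplets} only matches the hulls of \emph{triplets}; it gives you no control over where the resulting $\sigma$ sends the fourth and fifth points $d,d'$ --- that control is exactly Corollary \ref{corrc}, which the paper deduces \emph{from} this lemma, so leaning on conjugation here risks circularity. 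Your own last paragraph contradicts the first: if the normalization ``introduces no error at the relevant precision,'' that is a claim requiring proof, and it is the whole theorem.

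The repair is to delete the normalization and promote the error estimates of your final paragraph to the actual argument; this is what the paper does, in coordinates as given by Figure \ref{figmow}. Write
\[
[a,b;c,d]=\frac{1-a^{-1}c}{1-a^{-1}d}\cdot\frac{b-d}{b-c},\qquad
\frac{b-d}{b-c}=d+d\,\frac{1-c}{c-b}+b\,\frac{d-1}{c-b}.
\]
The first factor is $\equiv 1\ (\mathrm{mod}\ \pi^{l+u})$ because $\rho(a^{-1})\leq\rho(\pi)^{r}$ and $c\equiv d\ (\mathrm{mod}\ \pi^{l})$ (this is the rigorous form of ``$a$ may as well be $\infty$,'' using $u\leq r$); the second is $\equiv d$ because $\rho(c-b)=1$, $1-c\equiv 0\ (\mathrm{mod}\ \pi^{t+l})$ and $\rho(b)\leq\rho(\pi)^{s}$, $d-1\equiv 0\ (\mathrm{mod}\ \pi^{l})$ (the rigorous form of ``$b$ may as well be $0$ and $c$ may as well be $1$,'' using $u\leq t$ and $u\leq s$; your geometric series for $c^{-1}$ accomplishes the same thing). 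Finally $d\equiv d'\ (\mathrm{mod}\ \pi^{l+u})$ since both lie beyond the ball $B_d^{[u+l]}$. The case $a=\infty$ is handled separately by dropping the first factor.
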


\begin{figure}
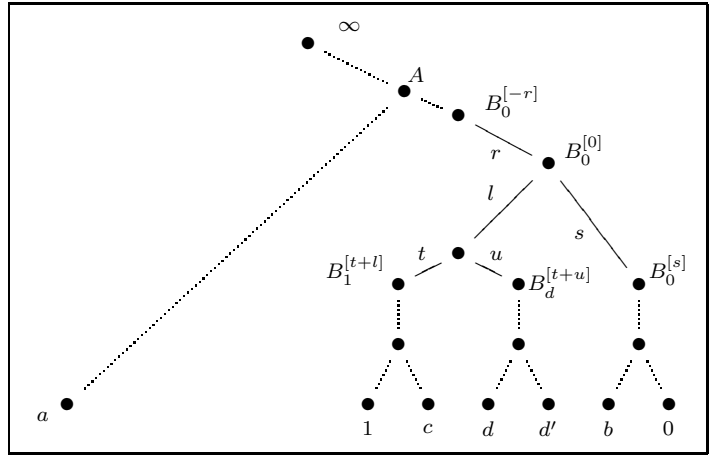

\[ \fbox{ \xygraph{
!{<0cm,0cm>;<.8cm,0cm>:<0cm,.8cm>::} 
!{(5,21) }*+{\bullet}="infty"   !{(5.7,21.2) }*+{{}^{\infty}}="inftyn"
!{(6.6,20.2) }*+{\bullet}="sep"   !{(6.8,20.4) }*+{{}^{A}}="sepn"
!{(7.5,19.8) }*+{\bullet}="A"   !{(8.4,20) }*+{{}^{B_0^{[-r]}}}="An"
!{(1,15) }*+{\bullet}="a"   !{(0.6,14.8) }*+{{}_{a}}="an"
!{(7.5,17.5) }*+{\bullet}="Z"
!{(9,19) }*+{\bullet}="U"   !{(9.6,19.2)}*+{{}^{B_0^{[0]}}}="Un"
!{(6.5,17) }*+{\bullet}="C"   !{(5.8,17.2)}*+{{}^{B_1^{[t+l]}}}="Cn"
!{(8.5,17) }*+{\bullet}="D"   !{(9.2,17)}*+{{}^{B_d^{[t+u]}}}="Dn"
!{(10.5,17) }*+{\bullet}="B"  !{(11,17.2)}*+{{}^{B_0^{[s]} }}="Bn"
!{(6.5,16) }*+{\bullet}="C2"   
!{(8.5,16) }*+{\bullet}="D2"   
!{(10.5,16) }*+{\bullet}="B2"  
!{(6,15) }*+{\bullet}="uno"   !{(6,14.6)}*+{{}_1}="unon"
!{(7,15) }*+{\bullet}="c"   !{(7,14.6)}*+{{}_c}="cn"
!{(10,15) }*+{\bullet}="b"   !{(10,14.6)}*+{{}_b}="bn"
!{(8,15) }*+{\bullet}="d"   !{(8,14.6)}*+{{}_d}="dn"
!{(9,15) }*+{\bullet}="dp"   !{(9,14.6)}*+{{}_{d'}}="dpn"
!{(11,15) }*+{\bullet}="cero"  !{(11,14.6) }*+{{}_0}="ceron"
"infty"-@{.}"sep" "sep"-@{.}"A"
 "A"-_r"U" "U"-_s"B" "U"-_l"Z" "Z"-_t"C" "D"-_u"Z" "D2"-@{.}"d" "D2"-@{.}"dp" "C2"-@{.}"uno" 
"a" -@{.} "sep" "B2"-@{.} "cero"  "C2"-@{.} "c" "B2"-@{.} "b"  
"C2"-@{.} "C" "B2"-@{.} "B" "D2"-@{.} "D"
} }
\]
\caption{Dotted lines joining vertices might have length 0, for instance, $A$ and $B_0^{[-r]}$ may coincide.
In the figure we can read, for instance, $\rho(a) \geq \rho(\pi)^{-r}$, while $d\equiv 1\ (\mathop{\mathrm{mod}} \pi^{l})$  but
$d\not\equiv 1\ (\mathop{\mathrm{mod}} \pi^{l+1})$ .} 
\label{figmow}
\end{figure}

\begin{proof}
It is apparent from Figure 5 that $\rho(c-b)=1$, whence
\begin{equation}
\frac{b-d}{b-c}=d+d\frac{1-c}{c-b} + b\frac{d-1}{c-b} \equiv d, \; (\mathrm{mod} \; \pi ^{l+u}). \label{idl1}
\end{equation}
Assume $a \neq \infty$. Since $\rho(a) \geq \rho(\pi ^{-r})$ and $c \equiv d$ (mod $\pi^l$) we obtain $a^{-1}c \equiv a^{-1}d$ (mod 
$\pi^{l+r}$) 
and 
therefore they are congruent modulo $\pi^{l+u}$. This implies that
\begin{equation}
 1 \equiv \frac{1-a^{-1}d}{1-a^{-1}c} \; (\textnormal{mod} \; \pi^{l+u}). \label{idl2}
\end{equation}
Combining (\ref{idl1}) and (\ref{idl2}) we get  
\begin{equation}
[a,b;c,d] =\frac{(a-c)}{(a-d)} \frac{(b-d)}{(b-c)} = \frac{(1-a^{-1}c)}{(1-a^{-1}d)} \frac{(b-d)}{(b-c)} \equiv d, \; (\mathrm{mod} \; \pi^{l
+u}).
\end{equation}

When $a = \infty$, $[\infty,b,c,d]=(b-d)/(b-c)$ which is congruent to $d$ modulo $\pi^{l+u}$ by (\ref{idl1}).
Since $d$ and $d'$ are congruent modulo $\pi^{l+u}$, the lemma is proved. 
\end{proof}
Next two results are consequences of the lemma. 

\begin{proposition}
Let $k$ a local field. Let $S=(A,B, C, D)$ be a quartet of balls 
whose hull is like Figure $\textnormal{\ref{figmob}}(ii)$, 
with $u=min\{r,s,t,u\}\geq0$. Let $a$, $b$, $c$, and $d$ be  ends beyond them as in
Figure $\textnormal{\ref{figmob}}(ii)$.
 Let $S'=(A',B', C', D')$ be another quartet of balls whose 
hull is isomorphic to the preceding one, while $a'$, $b'$, $c'$, and $d'$
are analogous ends. 
Then, they are conjugated if and only if 
$[a,b;c,d] \equiv [a',b';c',d'] \; (\mathrm{mod} \; \pi^{u+l})$. \label{thcr} 
\end{proposition}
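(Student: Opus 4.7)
My plan is to handle the two directions separately. The forward direction follows immediately from M\"obius invariance of the cross-ratio combined with Lemma \ref{lemrc}, while the converse proceeds by placing both quartets into a common normal form using triple-transitivity of $\mathbb M$ on $\mathbb P_1(k)$, whereupon the congruence modulo $\pi^{l+u}$ forces the fourth balls to coincide.

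For the forward direction, suppose $\sigma\in\mathbb M$ satisfies $\sigma*S=S'$. Then $\sigma(a),\sigma(b),\sigma(c),\sigma(d)$ are ends beyond $A',B',C',D'$ respectively, though in general distinct from $a',b',c',d'$, and M\"obius invariance of the cross-ratio gives
\[
[a,b;c,d]=[\sigma(a),\sigma(b);\sigma(c),\sigma(d)].
\]
Applying Lemma \ref{lemrc} separately to the quartets of ends $(\sigma(a),\sigma(b),\sigma(c),\sigma(d))$ and $(a',b',c',d')$, both of which lie beyond the same quartet of balls $(A',B',C',D')$, shows that both cross-ratios are congruent modulo $\pi^{l+u}$ to a common representative of the center of $D'$, and the desired congruence follows.

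For the converse, by triple transitivity of $\mathbb M$ on $\mathbb P_1(k)$, choose $\tau,\tau'\in\mathbb M$ with $(\tau(a),\tau(b),\tau(c))=(\tau'(a'),\tau'(b'),\tau'(c'))=(\infty,0,1)$ and apply them to $S$ and $S'$ respectively. Since the two hulls carry matching edge lengths $r,s,t,u,l$, the normalized triples $(\tau*A,\tau*B,\tau*C)$ and $(\tau'*A',\tau'*B',\tau'*C')$ coincide with the same standard configuration of balls around $\infty,0,1$ (a fact also guaranteed by Proposition \ref{triplets}). Then $\tau*D$ and $\tau'*D'$ are balls branching from $B_1^{[l]}$ in a common direction, and direct computation gives $[a,b;c,d]=\tau(d)\in\tau*D$ and $[a',b';c',d']=\tau'(d')\in\tau'*D'$. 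The hypothesis therefore forces $\tau(d)\equiv\tau'(d')\pmod{\pi^{l+u}}$, which by the ball-depth reading in Lemma \ref{lemrc} implies $\tau*D=\tau'*D'$. Hence $\tau^{-1}\circ\tau'$ is a M\"obius transformation carrying $S'$ onto $S$.

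The main technical delicacy is the bookkeeping of ends: the images of ends under a M\"obius conjugation are generally not literally equal to the specified ends but only share the same residue ball, and Lemma \ref{lemrc} is precisely the tool that converts this ball-level coincidence into the cross-ratio congruence modulo $\pi^{l+u}$.
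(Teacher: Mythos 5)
Your proof is correct and follows essentially the same route as the paper's: the forward implication via M\"obius invariance of the cross-ratio plus Lemma \ref{lemrc} (after transporting the ends of $S$ by $\sigma$ so both quartets of ends sit beyond the same balls), and the converse by normalizing both triples $(a,b,c)$ and $(a',b',c')$ to $(\infty,0,1)$, using $[\infty,0;1,t]=t$ to identify the fourth balls. The only difference is cosmetic: you apply the lemma symmetrically to both quartets of ends, whereas the paper first replaces $(a,b,c,d)$ by $\sigma(a,b,c,d)$ and then normalizes once.
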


\begin{proof}
Let $\tilde{\mathfrak{h}}=(\mathfrak{h},S)$ and $\tilde{\mathfrak{h}'}=(\mathfrak{h}',S)$ be 
the hulls of $S$ and $S'$, respectively.
Suppose that $S$ and $S'$ are conjugated by $\sigma \in \mathbb{M}$. Replacing 
if necessary $s=(a,b,c,d)$ by $\sigma(s)$ we can assume $\tilde{\mathfrak{h}}=\tilde{\mathfrak{h}'}$. 
Since the cross ratio is invariant under M\"obius transformations, we can assume that $a'=\infty,b'=0,c'=1$.
Therefore, the result follows from previous lemma. 

Suppose now that $[a,b;c,d] \equiv [a',b';c',d']$ (mod $\pi^{u+l}$). Let $\xi$  and $\xi'$ be the unique
M\"obius transformations satisfying $$\xi(a,b,c)=\xi'(a',b',c') =(\infty,0,1).$$
Let $t=\xi(d)$ and $t'=\xi'(d')$. We obtain $$t=[\infty,0;1,t]=[a,b;c,d]\equiv [a',b';c',d']=[\infty,0;1,t']=t',
(\mathrm{mod }\pi^{l+u}).$$ 
The congruence implies that the hulls of $\xi* S$ and $\xi'* S'$ coincide by Figure \ref{figmow}, whence 
$S$ and $S'$ are conjugated.  
\end{proof}

\begin{cor}
Let $k$ a local field. Suppose that we have a quartet $S=(A,B,C,D)$ 
whose hull is like figure $\textnormal{\ref{figmob}}(ii)$, where $u=\min\{r,s,t,u\}$. 
Suppose we have fifth ball $D'$ such that $S$ is conjugated to the quartet $S'=(A,B,C,D')$. 
Then $D=D'$. \label{corrc} 
\end{cor}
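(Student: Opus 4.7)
The plan is to combine the conjugation hypothesis with Proposition \ref{thcr} to pin down the center of $D$ modulo a sufficient power of $\pi$, and then read off $D=D'$.

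First, I would observe that any $\sigma\in\mathbb{M}$ conjugating $S$ to $S'$ necessarily fixes each of $A$, $B$, $C$ individually. The uniqueness argument in the proof of Proposition \ref{thmob} then implies that $\sigma$ fixes the entire sub-hull of $(A,B,C)$ pointwise: the central vertex $U$ is the only ball whose partition separates any two of the three ends beyond $A,B,C$, and iterating along the three edges of that sub-hull pins down every intermediate vertex, in particular the branching point $Z$ at which $D$ leaves the path from $U$ to $C$. Hence $D'=\sigma*D$ is attached to the same vertex $Z$ at the same tree-distance $u$, and the hull of $S'$ has the shape of Figure \ref{figmob}(ii) with the same parameters $r,s,t,l,u$ as the hull of $S$.

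Next, I would choose ends $a,b,c$ beyond $A,B,C$ and ends $d,d'$ beyond $D,D'$; Proposition \ref{thcr} applied with this common triple $(a,b,c)$ yields
\[
[a,b;c,d]\equiv [a,b;c,d']\pmod{\pi^{l+u}}.
\]
Applying the unique $\xi\in\mathbb{M}$ with $\xi(a,b,c)=(\infty,0,1)$ and using the $\mathbb{M}$-invariance of the cross ratio together with $[\infty,0;1,z]=z$, the congruence above rewrites as $\xi(d)\equiv \xi(d')\pmod{\pi^{l+u}}$. In the normalized picture $\xi*U=B_0^{[0]}$, and the hull of $\xi*S$ is as in Figure \ref{figmob}(ii); in particular $\xi*D$ and $\xi*D'$ are balls of radius $|\pi|^{l+u}$ centered at $\xi(d)$ and $\xi(d')$ respectively. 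Two balls of the same radius whose centers are congruent modulo that radius coincide, so $\xi*D=\xi*D'$ and therefore $D=D'$.

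The main obstacle I anticipate is confirming that the depth at which $D$ sits in the hull of Figure \ref{figmob}(ii), namely $l+u$, matches exactly the precision $\pi^{l+u}$ afforded by Proposition \ref{thcr}; this amounts to the same combinatorial bookkeeping already performed in Lemma \ref{lemrc} and in the proof of Proposition \ref{thcr}, so once the depth/precision matching is verified the rest of the argument is essentially immediate.
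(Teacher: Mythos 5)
Your proof is correct and follows essentially the same route as the paper's: both normalize via the unique M\"obius transformation $\xi$ with $\xi(a,b,c)=(\infty,0,1)$, invoke the cross-ratio congruence of Proposition \ref{thcr} to obtain $\xi(d)\equiv\xi(d')\ (\mathrm{mod}\ \pi^{l+u})$, and conclude that the two balls of radius $|\pi|^{l+u}$ with congruent centers coincide. Your preliminary observation that the conjugating transformation fixes the hull of $(A,B,C)$ pointwise is just a more explicit justification of the step the paper compresses into ``the proof of Proposition \ref{thmob} implies'' the normalized form of the quintet.
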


\begin{proof}
Let $a,b,c,d\in\mathbb{P}^1(k)$ be extremes beyond $A,B,C,D$ as in figure $\textnormal{\ref{figmob}}(ii)$, and let $d'$
be a extreme beyond $D'$.
Let $\xi$  be the unique M\"obius transformation satisfying $\xi(a,b,c)=(\infty,0,1)$.
The proof of Proposition \ref{thmob} impplies
$$\xi * (A,B,C,D,D')=\left( B_0^{[-r]},B_0^{[s]},B_1^{[t+l]},B_{\xi(d)}^{[u+l]},B_{\xi(d')}^{[u+l]}\right).$$
Furthermore
$$\xi(d)=[a,b;c,d]\equiv[a,b;c,d']=\xi(d'),\quad(\mathrm{mod }\pi^{l+u}),$$
so we conclude $\xi*D=\xi*D'$ and therefore $D=D'$ as claimed.   
\end{proof}

\begin{remark}
All results in this section hold if any ball $A$, $B$, $C$, or $D$, and the corresponding primed version, is
replaced by an end in $\mathbb{P}_1(k)$. For instance, if $A$ is replaced by an end we set $r=\infty$.
This observation is used in the sequel without further ado.
\end{remark}


\section{Computing embedding numbers via cross-ratio}

The results in \S5, specially Corollary 5.1, can be used to compute embedding numbers for $\Ha$ into $\Ea$,
 in the case in which
$\Ha$ and $\Ea$ are intersections
of maximal orders. This is so since a thick path is fully determined by its stem and its depth, while an infinite
leaf is fully determined by a long path.  
 Note that the group $\Gamma_1$ defined in \S1
 is precisely the group of moebius transformations fixing $S_0(\Ea)$ point-wise, while
$\Gamma_2$ is the group fixing $S_0(\Ea)$ as a set.
We use these observations in all that follows.

\begin{figure}
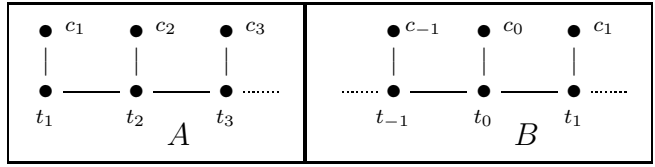

\[ \fbox{ \xygraph{
!{<0cm,0cm>;<.8cm,0cm>:<0cm,.8cm>::} 
!{(1,1) }*+{\bullet}="t1" !{(1,0.5) }*+{{}^{t_1}}="t1n"
!{(2.5,1) }*+{\bullet}="t2" !{(2.5,0.5) }*+{{}^{t_2}}="t2n"
!{(4,1) }*+{\bullet}="t3" !{(4,0.5) }*+{{}^{t_3}}="t3n"
!{(5,1) }*+{}="t6" 
!{(1,2) }*+{\bullet}="a1" !{(1.5,2) }*+{{}^{c_1}}="a1n"
!{(2.5,2) }*+{\bullet}="a2" !{(3,2) }*+{{}^{c_2}}="a2n"
!{(4,2) }*+{\bullet}="a3" !{(4.5,2) }*+{{}^{c_3}}="a3n"
!{(3.2,0.3) }*+{A}="n" 
"t1"-"t2" "t2"-"t3" "t3"-@{.}"t6"
"t1"-@{--}"a1" "t2"-@{--}"a2" "t3"-@{--}"a3"
 } }
\fbox{ \xygraph{
!{<0cm,0cm>;<.8cm,0cm>:<0cm,.8cm>::} 
!{(1,1) }*+{}="t1"
!{(2,1) }*+{\bullet}="t2" !{(2,0.5) }*+{{}^{t_{-1}}}="t2n"
!{(3.5,1) }*+{\bullet}="t3" !{(3.5,0.5) }*+{{}^{t_0}}="t3n"
!{(5,1) }*+{\bullet}="t4" !{(5,0.5) }*+{{}^{t_1}}="t4n"
!{(6,1) }*+{}="t6" 
!{(2,2) }*+{\bullet}="a2" !{(2.5,2) }*+{{}^{c_{-1}}}="a2n"
!{(3.5,2) }*+{\bullet}="a3" !{(4,2) }*+{{}^{c_0}}="a3n"
!{(5,2) }*+{\bullet}="a4" !{(5.5,2) }*+{{}^{c_1}}="a4n"
!{(4.2,0.3) }*+{B}="n" 
"t1"-@{.}"t2" "t2"-"t3" "t3"-"t4" "t4"-@{.}"t6"
 "t2"-@{--}"a2" "t3"-@{--}"a3" "t4"-@{--}"a4"
 } }
\]
\caption{The simplified diagram of $\mathfrak{S}_0(\Ha)$ in the proof of 
Theorem \ref{tb3} and Theorem \ref{tb1}.} 
\end{figure}

\paragraph{\textbf{Proof of Theorem \ref{tb1}}}
Let $L=k\otimes_{\oinki} \Ha\cong k\times k$.
 An embedding of
$\phi:L\rightarrow\matrici_2(k)$ is totally determined by an orientation of the maximal path  
$\mathfrak{S}_0\big(\phi(\oink_L)\big)$. The non-trivial automorphism of $L$ reverses this orientation.
If $n$ is the cardinality of the set $P$ of paths of length $r>0$  in $\mathfrak{S}_0(\Ha)$, without backtracking,
starting from an optimal vertex $c_0$, while $n'$ is the cardinality of the subset $P'$ of those paths in $P$ 
with two optimal extremes, we claim that $e_1=2(n-n')+n'=2n-n'$. For this, we recall that every path inside
$\mathfrak{S}_0(\Ha)$ has a unique vertex of maximal depth and the path of each side, if not trivial,
must go outwards through the leaves as in Figure 7.

 Let $P_r$ (respectively $P'_r$) denotes the set of reverses of the paths in $P$ (resp. $P'$).
The claim follows from three observations:
\begin{enumerate}
\item No two paths in $P$ can belong to the same orbit by Cor. 5.1,
\item every orbit must contain a path in $P\cup P_r$, by Prop. 5.1 applied to $c_0$ and the two ends of $\mathfrak{S}_0\big(\phi(\oink_L)\big)$,
\item The paths in $P$ that are equivalent to a path in $P_r$ are exactly those in $P'$, as follows from Prop. 5.1. 
\end{enumerate}
The paths can be counted using the simplified diagram in Figure 6B,
expanding the path joining $t_0$ and $c_0$ if needed.  
Note that no such path can go outwards through the leaves until
it reaches a returning point at distance $\left[\frac{r+1}2\right]$ if $r\leq2t$ or $r-t$ otherwise.
From the returning point on, any possible path is contained in $\mathfrak{S}_0(\Ha)$, and the ones
ending on an endpoint are exactly the ones  going outwards through the leaves precisely from the returning
point, except when $r\leq 2t$ is odd. In the latter case, there cannot be two endpoints in the path.
 It follows that $n$ and $n'$ are as in Table 1. The factor 2 in the last line is due to the fact that a path (walk)
reaching the stem can be continued along the stem on either side of the reaching point.

\begin{figure}
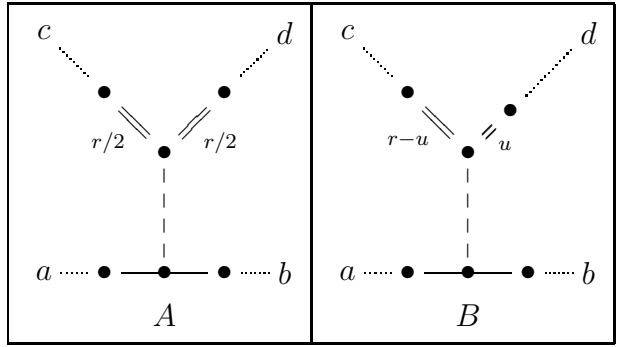

\[ \fbox{ \xygraph{
!{<0cm,0cm>;<.8cm,0cm>:<0cm,.8cm>::} 
!{(1,1) }*+{\bullet}="t1" 
!{(3,1) }*+{\bullet}="t2" !{(2,1) }*+{\bullet}="t3"
!{(0,1) }*+{a}="ti1" 
!{(4,1) }*+{b}="ti2"
!{(1,4) }*+{\bullet}="a1" !{(2,3) }*+{\bullet}="a3"
!{(3,4) }*+{\bullet}="a2" 
!{(0,5) }*+{c}="ai1" 
!{(4,5) }*+{d}="ai2"
!{(2,0.3) }*+{A}="n" 
"t1"-"t2" "t1"-@{.}"ti1" "t2"-@{.}"ti2" "t3"-@{--}"a3"
"a3"-@{=}^{r/2}"a1" "a3"-@{=}_{r/2}"a2" "a1"-@{.}"ai1" "a2"-@{.}"ai2"
 } }
\fbox{ \xygraph{
!{<0cm,0cm>;<.8cm,0cm>:<0cm,.8cm>::} 
!{(1,1) }*+{\bullet}="t1" 
!{(3,1) }*+{\bullet}="t2" !{(2,1) }*+{\bullet}="t3"
!{(0,1) }*+{a}="ti1" 
!{(4,1) }*+{b}="ti2"
!{(1,4) }*+{\bullet}="a1" !{(2,3) }*+{\bullet}="a3"
!{(2.7,3.7) }*+{\bullet}="a2" 
!{(0,5) }*+{c}="ai1" 
!{(4,5) }*+{d}="ai2"
!{(2,0.3) }*+{B}="n" 
"t1"-"t2" "t1"-@{.}"ti1" "t2"-@{.}"ti2" "t3"-@{--}"a3"
"a3"-@{=}^{r-u}"a1" "a3"-@{=}_u"a2" "a1"-@{.}"ai1" "a2"-@{.}"ai2"
 } }
\]
\caption{Location of the extremes in the computation of $e_2$ and $e_3$ in Theorem 1.4.
 Here the doble line denotes the path $\mathfrak{S}_0(\Ea)$.} 
\end{figure}

In order to compute $e_2$ we observe that any element in $\Gamma_2\backslash\Gamma_1$ interchange
the endpoints in  $\mathfrak{S}_0(\Ea)$. Every $\Gamma_2$-orbit correspond to two different 
$\Gamma_1$-orbits or to one invariant $\Gamma_1$-orbit. An orbit is invariant if the corresponding path
satisfy each of the following conditions:
\begin{itemize}
\item $r\leq2t$.
\item Both endpoints of the path are endpoints of $\mathfrak{S}_0(\Ha)$.
\item If $a$, $b$, $c$, and $d$ are extremes, located as in Figure 7A, then
$$[a,b;c,d]^{-1}=[a,b;d,c]\equiv [a,b;c,d]\ \ \  (\mathrm{mod}\ \pi^t).$$
\end{itemize}
The second condition implies that $r$ is even. Then $\chi_2$, as defined in \S1,
 is the number of invariant orbits. We obtain
$e_2=\frac12(e_1-\chi_2)+\chi_2=\frac12(e_1+\chi_2)$.
 Analogously, we prove
$e_3=\frac12(e_1+\chi_3)$, under similar conventions, since interchanging the edges of the infinite paths
$\mathfrak{S}_0\big(\phi(\oink_L)\big)$ leave invariant the paths satisfying $r\leq2t$ and 
$[b.a;c,d]\equiv [a,b;c,d]\ (\mathrm{mod}\ \pi^{t-r+2u})$, if $u$ is as in Figure 7B, 
where we no longer requires that the
path has two optimal endpoints, although this is necessarily so if $r=2t$.

To compute $e_4$ we observe that the orbits in $Y/\Gamma_1$ that remain invariant when we shift the endpoints of the path $\mathfrak{S}_0(\Ea)$ are precisely the ones corresponding to paths with two optimal endpoints, 
since the cross ratio 
has the symmetry
$[a,b;c,d]=[b,a;d,c]$. Assume $r\leq 2t$. By repeating the previous argument,
we observe that the number of orbits in $Y/\Gamma_1$  corresponding to paths with two optimal endpoints, is
$n''=\frac12(n'+\chi_2)$. We conclude that
$$e_4=\frac12(e_3+n'')=\frac14(e_1+n')+\frac14(\chi_2+\chi_3)=\frac n2+\frac14(\chi_2+\chi_3).$$
The result follows. If $r=0$ the result follows from Proposition 5.1 and the remark at the end of \S5.
\qed

\begin{figure}
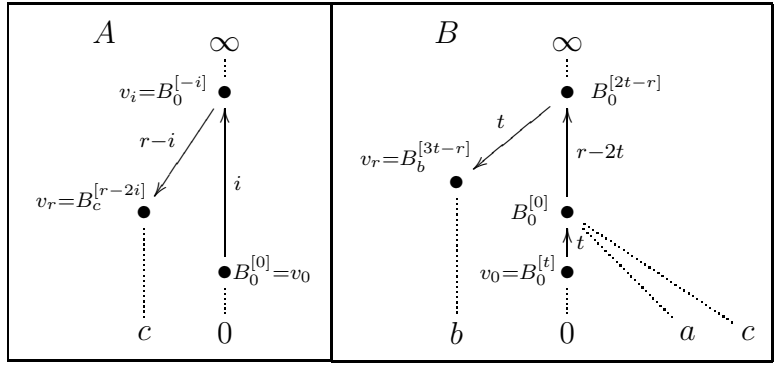

\[ 
\fbox{ \xygraph{
!{<0cm,0cm>;<.8cm,0cm>:<0cm,.8cm>::} 
!{(0.66,0) }*+{c}="e1" !{(2,0) }*+{0}="e2" !{(2,4.8) }*+{\infty}="e3"
!{(0.66,2) }*+{\bullet}="t1" !{(2,4) }*+{\bullet}="t2"  !{(2,1) }*+{\bullet}="t3"
!{(-0.2,2.2) }*+{{}^{v_r=B_c^{[r-2i]}}}="ti1" !{(1,4) }*+{{}^{v_i=B_0^{[-i]}}}="ti2" !{(2.8,1) }*+{{}^{B_0^{[0]}=v_0}}="ti3"
!{(0,5) }*+{A}="ti99"
"t1"-@{<-}^{r-i}"t2" "t2"-@{<-}^i"t3" "t1"-@{.}"e1" "t2"-@{.}"e3" "t3"-@{.}"e2"
 } }
\fbox{ \xygraph{
!{<0cm,0cm>;<.8cm,0cm>:<0cm,.8cm>::} 
!{(0.16,0) }*+{b}="e1" !{(2,0) }*+{0}="e2" !{(2,4.8) }*+{\infty}="e3"
!{(4,0) }*+{a}="e4" !{(5,0) }*+{c}="e5"
!{(0.16,2.5) }*+{\bullet}="t1" !{(2,4) }*+{\bullet}="t2"  !{(2,2) }*+{\bullet}="t3"  !{(2,1) }*+{\bullet}="t4"
!{(-0.5,2.9) }*+{{}^{v_r=B_b^{[3t-r]}}}="ti1" !{(3,4) }*+{{}^{B_0^{[2t-r]}}}="ti2" !{(1.4,2) }*+{{}^{B_0^{[0]}}}="ti3"
!{(1.2,1) }*+{{}^{v_0=B_0^{[t]}}}="ti4"
!{(0,5) }*+{B}="ti99"
"t1"-@{<-}^t"t2" "t2"-@{<-}^{r-2t}"t3" "t3"-@{<-}^t"t4" "t1"-@{.}"e1" "t2"-@{.}"e3" "t4"-@{.}"e2"
"t3"-@{.}"e4" "t3"-@{.}"e5"
 } }
\]
\caption{Configuration of paths in the proofs of Theorem 1.2 (A) and Theorem 1.3 (B). The branch of
$\Ea$ is denoted with arrows. Note that in B, the integer
$a$ or $c$ might be congruent to $0$ modulo $\pi$, buy $a$ and $c$ are not congruent to each other.} 
\end{figure}

\paragraph{\textbf{Proof of Theorem \ref{tb2}}}
If $\Ha$ is generated by a nilpotent element $\nu$, then $\mathfrak{S}=\mathfrak{S}_0(\Ha)$ is an infinite
 leaf (see \S1).
If $\Ha$ is optimal in $\Ea$, then one of the endpoints $v_0$ of the path $\mathfrak{S}_0(\Ea)$ is an endpoint of 
$\mathfrak{S}$. Let $v_0,v_1,\dots,v_r$ be the vertices of $\mathfrak{S}_0(\Ea)$ in order.
There is a unique vertex $v_i$ whose relative depth \cite[\S2]{eichler4}
$p(v_i,\mathfrak{S})=p_0$ is maximal, and the path from each side of $v_0$ goess outwards through the leaves.
It follows that $p_0=i$ and $0\leq r-i\leq [r/2]$. To compute $e_4$,
it suffices to prove that $r-i$ completely determines the $\Gamma_2$-orbit of an isomorphic copy of $\Ha$,
or equivalently that $r-i$ completely determines the $\mathbb{M}$-orbit of the pair $(\Ha,\Ea)$,
applying Prop. \ref{triplets} to the triplet formed by the endpoints of  $\mathfrak{S}_0(\Ea)$
and the common end of all long paths in the infinite leaf. To compute $e_3$ we observe that a
$\Gamma_2$-orbit correspond to exactly one  $\Gamma_1$-orbit precisely when both
extremes of $\mathfrak{S}_0(\Ea)$ are endpoints of $\mathfrak{S}_0(\Ha)$, reasoning 
as in the preceding proof.

To compute $e_1$ and $e_2$, fix an optimal embedding $\phi:\Ha\rightarrow\Ea$, so that 
$v_0$ is an extreme of $\mathfrak{S}_0\Big(\phi(\Ha)\Big)$, and assume $v_i$  is the deepest vertex as before.
Using the $\mathbb{M}$-action, we can assume that $\mathfrak{S}_0\Big(\phi(\Ha)\Big)$ is the graph
whose vertices correspond to all balls of radius 1 or larger as in Figure 8A.
$v_0$ is the vertex corresponding to $B_0^{[0]}$, $v_i$ is the vertex corresponding to $B_0^{[-i]}$, 
and $v_r$ is the vertex corresponding to $B_c^{[-2i+r]}$, with $\rho(c)=\rho(\pi)^{-i}$. 
Furthermore, we have $\phi(\nu)=\lbmatrix 0u00$ for some $u\in\oinki^*$.
In particular, conjugation by $\phi(1+cu^{-1}\nu)$ sends the end $0$ to the end $c$, whence, if $i$ is fixed, the class
of $cu^{-1}$ modulo $\pi^{r-2i}$, or equivalently the class 
$\overline{cu^{-1}\pi^{i}}\in(\oinki/\pi^{r-i}\oinki)^*$, is a complete invariant of the conjugacy class 
of the trio $(v_0,v_r,\phi)$. We conclude that the total number of possible invariants is
$$n=1+(p-1)\sum_{r-i=1}^{[r/2]}p^{r-i-1}=p^{[r/2]},$$
while the number of invariants corresponding to paths $\mathfrak{S}_0(\Ea)$ with two endpoints is
 $n'=(p-1)p^{r/2-1}$ if $r$ is even, and $n'=0$ when $r$ is odd.
Then $e_1=2n-n'$ as for Theorem \ref{tb1}. 
When $i=r/2$, the Moebius transformation $z\mapsto z-c$ leaves $\phi$ invariant while it
sends a path whose invariant is $\overline{cu^{-1}\pi^i}$ to a path whose invariant is  $-\overline{cu^{-1}\pi^i}$.  
Since $\overline{cu^{-1}\pi^i}$ is a unit, they are equal exactly when $2\equiv 0\ (\mathrm{mod}\ \pi^{r/2})$.
If the latter condition holds, every path in $P'$ is equivalent to a path in $P'_r$, if $P'$ and $P'_r$ are defined as in the preceding proof. We conclude that $e_2=(n-n')+n'=n$. If the condition fails to hold, or if $r$ is odd,
we have $e_2=e_1/2$. 
\qed

\paragraph{\textbf{Proof of Theorem \ref{tb3}}}
By Proposition \ref{thmob} and Lemma \ref{newlemma}, we can always assume that the embedding $\phi:L\rightarrow
\alge$ and the Eichler order $\Ea$ are choosen in a way that the stem $T$ of the thick ray $R=\mathfrak{S}_0\big(\phi(\Ha)\big)$ and the path $C=\mathfrak{S}_0(\Ea)$ are like in figure 8B, where $T$ is the ray joining $B_0^{[0]}$ and $\infty$.
 Since $(S,C\cup T)$ is the hull of $S=\left(\infty,B_0^{[t]},B_b^{[3t-r]}\right)$, all statements about $e_3$ and $e_4$ follows
as before. Note that the endpoints of $C$ can be shifted by an element stabilizing $\Ha$ if and only if $r=2t$.

Recall that an embedding $\phi:L\rightarrow \matrici_2(k)$ is completely determined by the images of the matrices
$\mu=\lbmatrix 1000$ and $\mu'=\lbmatrix 1100$. They correspond to two maximal path 
with a common end, but not the other. The intersection of these two paths is the ray $S_0\Big(\phi(\Ha_0)\Big)$,
where $\Ha_0=\lbmatrix {\oinki}{\oinki}0{\oinki}$. Assume now that this is the ray with end $\infty$ and stem border
$B_0^{[0]}$. Then $\phi$ is completely determined by a pair of ends $(a,c)\in \mathbb{P}^1(k)$, which are integers with
different images in the residue field, because of the condition that the intersection of the two maximal paths is the ray. 
Reasoning as in the preceding proof and setting $d=b\pi^{2t-r}\in\oink_k^*$,  we conclude that the pair
$$\left(\overline{ad^{-1}},\overline{cd^{-1}}\right)\in\left\{(x,y)\in(\oinki/\pi^t\oinki)^2 \Big|x\not\equiv y
\ \ (\mathop{\mathrm{mod}} \pi)\right\}$$
 is a complete invariant, 
and the result follows as before (see Figure 8B). The relation $e_4=\frac12e_3$ is trivial if $r\neq 2t$.
If $r=2t$, we observe that the Moebius
 transformation $z\mapsto b-z$ interchanging the endpoints of
$C$ replaces the invariant $(x,y)$ by $(1-x,1-y)$, which cannot be equal to $(x,y)$ as 
the equation $x=1-x$ has one solution in $\oinki/\pi^t\oinki$ when $2$ is invertible, and none otherwise,
but never $2$ distinct solutions.
\qed

\section{Ackonwledgements}

The first author was supported by Fondecyt, grant No 1120844.
The second author was supported by Fondecyt, grant No 1140533, while the third
author was partly supported by Fondecyt, grant No 1120565.

\end{document}